\theoremstyle{plain}
\newtheorem{theorem}{Theorem}
\newtheorem{corollary}[theorem]{Corollary}
\newtheorem{lemma}[theorem]{Lemma}
\theoremstyle{definition}
\date{\today}
\title[Enumeration of non-crossing partitions]{Enumeration of non-crossing partitions according to subwords with repeated letters}
\begin{document}

\author[M. Shattuck]{Mark Shattuck}
\address{Department of Mathematics, University of Tennessee,
37996 Knoxville, TN}
\email{mark.shattuck2@gmail.com}

\begin{abstract}
An avoidance pattern where the letters within an occurrence of which are required to be adjacent is referred to as a \emph{subword}.  In this paper, we enumerate members of the set $NC_n$ of non-crossing partitions of length $n$ according to the number of occurrences of several infinite families of subword patterns each containing repeated letters.  As a consequence of our results, we obtain explicit generating function formulas counting the members of $NC_n$ for $n \geq 0$ according to all subword patterns of length three containing a repeated letter.  Further, simple expressions are deduced for the total number of occurrences over all members of $NC_n$ for the various families of patterns.  Finally, combinatorial proofs can be given explaining three infinite families of subword equivalences over $NC_n$, which generalize the following equivalences:  $211 \equiv 221$, $1211\equiv 1121$ and $112 \equiv 122$.
\end{abstract}
\subjclass[2010]{05A15, 05A05}
\keywords{non-crossing partition, subword pattern, Catalan number, generating function}

\maketitle

\section{Introduction}

A collection of disjoint nonempty subsets of a set whose union is the set is known as a \emph{partition}, with the constituent subsets referred to as \emph{blocks} of the partition.  Let $[n]=\{1,2,\ldots,n\}$ for $n \geq 1$, with $[0]=\varnothing$.  The set of partitions of $[n]$ containing exactly $k$ blocks will be denoted by $\mathcal{P}_{n,k}$, with $\mathcal{P}_n=\cup_{k=0}^n\mathcal{P}_{n,k}$ denoting the set of all partitions of $[n]$.  A partition $\Pi=B_1/B_2/\cdots/B_k \in \mathcal{P}_{n,k}$ is said to be in \emph{standard form} if its blocks $B_i$ are such that $\min(B_i)<\min(B_{i+1})$ for $1 \leq i \leq k-1$.  A partition $\Pi$ in standard form can be represented sequentially by writing $\pi=\pi_1\cdots\pi_n$, where $i \in B_{\pi_i}$ for each $i \in [n]$ (see, e.g., \cite{Man}). The sequence $\pi$ is referred to as the \emph{canonical sequential form} of the partition $\Pi$.  Then $\Pi$ in standard form implies $\pi_{i+1} \leq \max(\pi_1\cdots \pi_i)+1$ for $1 \leq i \leq n-1$, which is known as the \emph{restricted growth} condition (see, e.g., \cite{Mi}).

A partition $\Pi$ is said to be \emph{non-crossing} \cite{Kl} if its sequential representation $\pi$ contains no subsequence of the form $a$-$b$-$a$-$b$, where $a<b$ (i.e., if $\pi$ avoids the pattern 1-2-1-2 in the classical sense).  Let $NC_n$ denote the set of non-crossing partitions of $[n]$; recall that $|NC_n|=C_n$ for all $n \geq 0$, where $C_n=\frac{1}{n+1}\binom{2n}{n}$ is the $n$-th Catalan number.  We will denote the Catalan number generating function $\sum_{n \geq 0}C_nx^n=\frac{1-\sqrt{1-4x}}{2x}$ by $C(x)$.

Let $\tau=\tau_1\cdots \tau_m$ be a sequence of positive integers whose set of distinct letters comprise $[\ell]$ for some $1 \leq \ell \leq m$.  Then the sequence $\rho=\rho_1\cdots \rho_n$ is said to \emph{contain} $\tau$ as a \emph{subword} (pattern) if some string of consecutive letters of $\rho$ is order-isomorphic to $\tau$.  That is, there exists an index $i \in [n-m+1]$ such that $\rho_i\rho_{i+1}\cdots\rho_{i+m-1}$ is isomorphic to $\tau$.  If no such index $i$ exists, then $\rho$ \emph{avoids} $\tau$ as a subword.

Here, we will be interested in counting the members of $NC_n$ according to the number of occurrences of certain subword patterns, focusing on several infinite families of patterns.  Let $\mu_\tau(\pi)$ denote the number of occurrences of the subword $\tau$ in the partition $\pi$.  We compute the generating function $F=F_\tau$ for the distribution of $\tau$ on $NC_n$ where
$$F=\sum_{n\geq0}\left(\sum_{\pi \in NC_n}q^{\mu_\tau(\pi)}\right)x^n$$
in several cases when $\tau$ has one or more repeated letters.  This extends recent work initiated in \cite{MSh} which focused on subwords where all of the letters in a pattern were distinct.  We remark that other finite discrete structures with sequential representations that have been enumerated according to the number of subwords include $k$-ary words \cite{BM}, set partitions \cite{MSY} and involutions \cite{MS1}.  For examples of other types of statistics which have been studied on non-crossing partitions, we refer the reader to \cite{LF,MS2,Si,YY,ZZ}.

This paper is organized as follows.  In the next section, we enumerate members of $NC_n$ according to four infinite families of subword patterns and compute the corresponding generating function $F$ in each case.  Simple formulas for the total number of occurrences on $NC_n$ for the various patterns are deduced from our formulas for $F$. Further, an explicit bijection is defined which demonstrates the equivalence of the subwords $(\rho+1)1^a$ and $(\rho'+1)1^{a'}$ of the same length.  In the third section, the pattern $12\cdots (m-1)m^a$ is treated using the \emph{kernel method} \cite{HM} and a formula for the generating function of its joint distribution with an auxiliary parameter on $NC_n$ is found.  Finally, a bijection is given which demonstrates the equivalence of $1^a23\cdots m$ and $12\cdots (m-1)m^a$ as subwords on $NC_n$ for all $a,m \geq 2$.

As special cases of our results, we obtain $F_\tau$  for all $\tau$ of length three containing a repeated letter.  See Table \ref{tab1} below, where the equation satisfied by $F_\tau$ is given for each $\tau$.  Note that the case $212$ is trivial since any partition $\pi$ containing a string $x$ of the form $x=bab$ where $a<b$ must contain an occurrence of 1-2-1-2, upon considering the leftmost occurrence of the letter $a$ in $\pi$ together with $x$.

\begin{table}[htp]
\begin{center}
\begin{tabular}{|l|l|l|}
  \hline
  Subword & Generating function equation & Reference \\\hline\hline
$111$ & $x(1-qx+(q-1)x^2)F^2=(1-qx+(q-1)x^3)(F-1)$&Corollary \ref{1a1b2cor1}\\\hline
$112$ & $x(1+(q-1)x)F^2=(1+(q-1)x^2)F-1$  &Corollary \ref{1a1b2cor1} \\\hline
$121$ & $xF^2=(1-(q-1)x^2)(F-1)$&Theorem \ref{1arho1b} \\\hline
$122$ & $x(1+(q-1)x)F^2=(1+(q-1)x^2)F-1$ &Theorem \ref{t1m-1mal2} \\\hline
$211$ & $x(1+(q-1)x)F^2=(1+2(q-1)x^2)F-1-(q-1)x^2$ &Theorem \ref{thtau1b} \\\hline
$212$ & $xF^2=F-1$&Trivial\\\hline
$221$ & $x(1+(q-1)x)F^2=(1+2(q-1)x^2)F-1-(q-1)x^2$ &Theorem \ref{thtau1b} \\\hline
\end{tabular}\vspace{10px}
\caption{Generating functions $F=F_\tau$ for $\tau$ of length three containing a repeated letter}\label{tab1}
\end{center}
\end{table}

\section{Distributions of some infinite families of patterns}

We first consider the patterns $\tau=1^a$ and $\rho=1^b2$, where $a, b \geq 1$, and treat them together as a joint distribution on $NC_n$.  We shall determine a formula for the generating function (gf) of this distribution given by
$$\sum_{n\geq0}\left(\sum_{\pi \in NC_n}p^{\mu_\tau(\pi)}q^{\mu_\rho(\pi)}\right)x^n,$$
which we will denote by $F$.  We will make use of the \emph{symbolic} enumeration method (see, e.g., \cite{FS}) in finding $F$.

\begin{theorem}\label{th1a1b2}
If $a \geq b \geq 1$, then the generating function $F$ enumerating the members of $NC_n$ for $n \geq 0$ jointly according to the number of occurrences of $1^a$ and $1^b2$ satisfies
\footnotesize{\begin{equation}\label{th1a1b2e1}
(x-px^2+q(p-1)x^a+(q-1)(1-px)x^b)F^2=(1-px+q(p-1)x^a+(q-1)(1-px)x^b)F-1+px-(p-1)x^a.
\end{equation}}\normalsize
If $1\leq a<b$, then $F$ satisfies
\footnotesize{\begin{equation}\label{th1a1b2e2}
(x-px^2+(p-1)x^a+(q-1)(1-x)p^{b-a+1}x^b)F^2=(1-px+(p-1)x^a+(q-1)(1-x)p^{b-a+1}x^b)F-1+px-(p-1)x^a.
\end{equation}}
\end{theorem}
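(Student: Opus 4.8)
The plan is to use the symbolic method to build a combinatorial decomposition of non-crossing partitions based on the position of the last occurrence of the letter $1$ in the canonical sequential form. Recall that a non-crossing partition $\pi \in NC_n$ with $n \geq 1$ has the shape $\pi = 1\alpha 1 \beta$, where the second displayed $1$ is the final occurrence of the letter $1$, the factor $\alpha$ lies strictly between the first and last $1$, and $\beta$ is the (possibly empty) suffix after the last $1$. The non-crossing condition (avoidance of $1$-$2$-$1$-$2$) forces $\alpha$ and $\beta$ each to be, after standardization, a non-crossing partition on a disjoint set of values, and crucially no value appearing in $\alpha$ may reappear in $\beta$; moreover every letter of $\alpha$ and $\beta$ is $\geq 2$. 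Thus $\pi$ is obtained from an ordered pair of non-crossing partitions (the standardizations of $\alpha$ and $\beta$), glued together with the two $1$'s, and this yields the basic quadratic $xF^2 \approx F - 1$ shape visible throughout Table \ref{tab1}. The work is in tracking how the statistics $\mu_{1^a}$ and $\mu_{1^b2}$ transform under this gluing.

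First I would set up the bookkeeping. The letter $1$ appears exactly $j+2$ times for some $j \geq 0$, where $j$ counts the occurrences of $1$ inside $\alpha$; but for a subword (consecutive-letter) pattern we must be careful, since occurrences of $1^a$ and of $1^b2$ depend on maximal runs of $1$'s, not just their total count. So the cleaner decomposition records the run structure: write $\pi$ according to its maximal blocks of consecutive $1$'s, say $r$ of them of lengths $c_1, \ldots, c_r \geq 1$ with $c_1 + \cdots + c_r = $ (number of $1$'s), separated and possibly flanked by nonempty factors on letters $\geq 2$ that are themselves non-crossing and pairwise value-disjoint in the non-crossing sense. A run of $c$ consecutive $1$'s contributes $\binom{c-1}{a-1}$-type counts to $\mu_{1^a}$ — more simply, a run of length $c$ contains $\max(c-a+1,0)$ occurrences of $1^a$ — and each internal or right-bordering run of $1$'s that is immediately followed by a letter $\geq 2$ contributes one occurrence of $1^b2$ for each length-$b$ suffix of $1$'s it has, i.e. $\max(c-b+1,0)$ occurrences when followed by a larger letter. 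I would encode a single maximal run of $1$'s followed (or not) by a larger letter as a generating function in $x$, $p$, $q$, summing the geometric-type series $\sum_{c \geq 1} p^{\max(c-a+1,0)} q^{\epsilon \max(c-b+1,0)} x^c$ where $\epsilon \in \{0,1\}$; these sums are rational in $x$ with denominators $(1-x)$ and $(1-px)$, which is exactly the source of the $(1-px)$ factors and the $x^a$, $x^b$, $p^{b-a+1}$ terms in \eqref{th1a1b2e1} and \eqref{th1a1b2e2}.

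Next I would assemble $F$ from these pieces. The non-crossing gluing means that the factors on letters $\geq 2$ that separate the runs of $1$'s are themselves enumerated by $F$ (after standardization they are arbitrary non-crossing partitions, and the pattern statistics are additive over them since a $1^a$ or $1^b2$ occurrence cannot straddle a run of $1$'s and a higher-letter factor once we are past the first letter — the only subtlety is the very first letter being a $1$, which is why the leading $1$ is pulled out separately and gives rise to the $-1 + px - (p-1)x^a$ inhomogeneous term). Writing $F = 1 + (\text{contribution of } n \geq 1)$ and expressing the $n \geq 1$ part as (run-of-$1$'s block)$\times$(higher factor $\sim F$) iterated, I get a functional equation that is quadratic in $F$ once the geometric series are summed in closed form. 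The split into the two cases $a \geq b$ versus $a < b$ arises precisely because the interaction term $p^{\max(c-a+1,0)}q^{\max(c-b+1,0)}$ has different closed forms in the two regimes: when $a \geq b$ the $q$-weight "turns on" first as $c$ grows, when $a < b$ the $p$-weight does, producing the asymmetric $p^{b-a+1}$ factor in \eqref{th1a1b2e2}.

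The main obstacle I anticipate is verifying that the decomposition correctly handles the boundary interactions: specifically, (i) confirming that a $1^b2$ occurrence is counted with weight $q$ exactly once per qualifying run-followed-by-larger-letter and never double-counted or missed at the junction where a higher-letter factor ends and the next run of $1$'s begins (it cannot, since that junction reads "larger letter, then $1$", which is not order-isomorphic to $1^b2$), and (ii) getting the inhomogeneous terms right from the leading $1$ — in particular, the leading $1$ together with the run of $1$'s immediately following it must be merged into a single initial run, whose $1^a$-weight and $1^b2$-weight must be reconciled with the "generic internal run" generating function; this reconciliation is what produces the exact terms $-1+px-(p-1)x^a$ on the right-hand side and the sign pattern on $x^a$, $x^b$ in the quadratic coefficients. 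Once the two geometric sums are evaluated and substituted, the claimed identities \eqref{th1a1b2e1} and \eqref{th1a1b2e2} should follow by routine algebraic simplification.
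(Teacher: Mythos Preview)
Your overall strategy---decompose $\pi$ by the runs of $1$'s and the intervening higher-letter blocks, then track the $p,q$-weights as geometric sums---is exactly the paper's approach (the paper peels off the first maximal run of $1$'s together with the first $1$-free block and recurses, which is the same idea packaged recursively). However, there is a concrete counting error that would derail the computation. A run $1^c$ immediately followed by a letter $y>1$ contributes \emph{at most one} occurrence of $1^b2$, namely the single string $1^b y$ ending at $y$, and this occurs if and only if $c\geq b$. Your claimed contribution $\max(c-b+1,0)$ is the count of length-$b$ \emph{substrings} of $1^c$, not suffixes; only the unique length-$b$ suffix abuts $y$. With the wrong weight $q^{\max(c-b+1,0)}$ the run generating function (for $a\geq b$, run followed by a larger letter) becomes
\[
\frac{x-x^b}{1-x}+\frac{qx^b-q^{a-b+1}x^a}{1-qx}+\frac{pq^{a-b+1}x^a}{1-pqx},
\]
which does not lead to \eqref{th1a1b2e1}. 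The correct per-run weight is $q^{[c\geq b]}$, giving
\[
\frac{x-x^b}{1-x}+q\,\frac{x^b-x^a}{1-x}+\frac{pqx^a}{1-px},
\]
precisely the coefficient of $F(F-1)$ in the paper's derivation.

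Separately, your opening ``last $1$'' decomposition $\pi=1\alpha 1\beta$ does not work cleanly here: $\alpha$ may itself begin or end with $1$ (e.g.\ $\pi=1211$ gives $\alpha=21$), so a run of $1$'s can be split across the boundary, and $\alpha$ need not standardize to a restricted-growth sequence. You were right to abandon it for the run-based decomposition; the paper's version---pull off the initial maximal run $1^r$, then the maximal $1$-free block $\alpha$ (contributing $F-1$), then the remainder $\beta$ starting with $1$ (contributing $F$)---avoids both issues and delivers the quadratic directly.
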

\begin{proof}
First assume $a \geq b \geq 1$ and consider the following cases on $\pi \in NC_n$:
(i) $\pi=1^n$ for some $0 \leq n \leq a-1$, (ii) $\pi=1^n$, where $n \geq a$, (iii) $\pi=1^r\alpha\beta$, where $1 \leq r \leq b-1$, $\alpha$ is nonempty and contains no $1$'s and $\beta$ starts with $1$ if nonempty, (iv) $\pi$ as in (iii), but where $b \leq r \leq a-1$, or (v) $\pi$ as in (iii), but where $r \geq a$.  Combining cases (i)--(v) implies $F$ is determined by
$$F=\frac{1-x^a}{1-x}+\frac{px^a}{1-px}+\left(\frac{x-x^b}{1-x}+q\frac{x^b-x^a}{1-x}+pq\frac{x^a}{1-px}\right)F(F-1).$$
Note that the sections $\alpha$ and $\beta$ of $\pi$ are determined by the factors $F-1$ and $F$, respectively, in cases (iii)--(v).  Further, $r \geq b$ in (iv) and (v) implies that there is an extra occurrence of $\rho$ (accounted for by the lone $q$ factor) arising due to the initial run of $1$'s within $\pi$ and the first letter of $\alpha$.  After simplification, the preceding equation for $F$ rearranges to give \eqref{th1a1b2e1}.

If $1\leq a<b$, then by similar reasoning we have that $F$ satisfies
$$F=\frac{1-x^a}{1-x}+\frac{px^a}{1-px}+\left(\frac{x-x^a}{1-x}+\frac{px^a-p^{b-a+1}x^b}{1-px}+\frac{p^{b-a+1}qx^b}{1-px}\right)F(F-1),$$
which simplifies to gives \eqref{th1a1b2e2}.
\end{proof}

Taking $q=1$ and $p=1$ in Theorem \ref{th1a1b2}, and solving for $F$ (replacing $p$ by $q$ in the resulting formula in the first case), yields the following result.
\begin{corollary}\label{1a1b2cor1}
The generating functions counting members of $NC_n$ for $n \geq 0$ according to the number of occurrences of the patterns $1^m$ and $1^m2$ where $m \geq 1$ are given respectively by
$$\frac{1-qx+(q-1)x^m-\sqrt{(1-qx+(q-1)x^m)((1-4x)(1-qx)-3(q-1)x^m)}}{2x(1-qx+(q-1)x^{m-1})}$$
and
$$\frac{1+(q-1)x^m-\sqrt{(1-(q-1)x^m)^2-4x}}{2x(1+(q-1)x^{m-1})}.$$
\end{corollary}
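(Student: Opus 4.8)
The plan is to specialize the two quadratic relations of Theorem \ref{th1a1b2}, apply the quadratic formula in each case, and throughout select the branch of the square root that produces a formal power series (equivalently, the branch with $F(0)=C_0=1$). No new combinatorics is needed; the work is purely algebraic bookkeeping.

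For the pattern $1^m$ I would argue as follows. Since this pattern does not involve $1^b2$, set $q=1$ in \eqref{th1a1b2e1}, take $a=m$, and choose $b=1$; the value of $b$ is immaterial once $q=1$ because the term $(q-1)(1-px)x^b$ then vanishes, and the hypothesis $a\ge b\ge 1$ holds as $m\ge 1$. With $q=1$, \eqref{th1a1b2e1} collapses to
\[
(x-px^2+(p-1)x^m)F^2=(1-px+(p-1)x^m)F-1+px-(p-1)x^m.
\]
Writing $A=x-px^2+(p-1)x^m=x\big(1-px+(p-1)x^{m-1}\big)$ and $B=(1-x)(1-px)$, one checks the identities $1-px+(p-1)x^m=A+B$ and $-1+px-(p-1)x^m=-(A+B)$, so the relation becomes $AF^2-(A+B)F+(A+B)=0$. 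Hence $F=\dfrac{(A+B)\pm\sqrt{(A+B)(B-3A)}}{2A}$, and a short computation gives $B-3A=(1-4x)(1-px)-3(p-1)x^m$. Taking the minus sign (the only choice with $F(0)=1$) and then renaming $p$ as $q$ produces the first stated generating function.

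For the pattern $1^m2$ I would instead set $p=1$ in \eqref{th1a1b2e1} and take $b=m$ (with any $a\ge m$; again $a$ drops out once $p=1$). All $x^a$-terms vanish, the coefficient $1-px$ becomes $1-x$, and after cancelling the common factor $1-x$ the relation reduces to $x\big(1+(q-1)x^{m-1}\big)F^2-\big(1+(q-1)x^m\big)F+1=0$. Its discriminant simplifies,
\[
(1+(q-1)x^m)^2-4x\big(1+(q-1)x^{m-1}\big)=(1-(q-1)x^m)^2-4x,
\]
and choosing the branch with $F(0)=1$ yields the second formula.

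The only points that require any care are the three bookkeeping identities in the first case ($1-px+(p-1)x^m=A+B$, its negative, and $B-3A=(1-4x)(1-px)-3(p-1)x^m$), the collapse of the discriminant to $(1-(q-1)x^m)^2-4x$ in the second case, and — in both cases — checking that the sign before the radical must be fixed so that $F$ is a power series, i.e.\ so that $F\to 1$ as $x\to 0$, reflecting $|NC_0|=1$. I do not anticipate any genuine obstacle here; everything follows by direct manipulation of \eqref{th1a1b2e1}.
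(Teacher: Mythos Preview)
Your proposal is correct and follows exactly the approach sketched in the paper: specialize \eqref{th1a1b2e1} by setting $q=1$ (for $1^m$) or $p=1$ (for $1^m2$), solve the resulting quadratic, and pick the branch with $F(0)=1$, renaming $p\mapsto q$ in the first case. The paper compresses this to a single sentence; you have simply supplied the algebraic bookkeeping (the $A,B$ identities and the discriminant simplification), all of which checks out.
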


Differentiating the formulas in Corollary \ref{1a1b2cor1} with respect to $q$, and extracting the coefficient of $x^n$, yields simple expressions for the total number of occurrences of the respective subwords on $NC_n$.
\begin{corollary}\label{1a1b2cor2}
The total number of occurrences of $1^m$ and $1^m2$ within all the members of $NC_n$ for $n \geq m\geq 1$ are given by $\binom{2r}{r+1}$ and $\binom{2r-1}{r+1}$, respectively, where $r=n-m+1$.
\end{corollary}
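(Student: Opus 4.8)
The plan is to use the standard fact that, for a bivariate generating function $F=F(x,q)$ tracking a single statistic over $NC_n$, the total number of occurrences over all of $NC_n$ equals $[x^n]\,\partial_q F|_{q=1}$, together with the observation that $F(x,1)=C(x)$ in both cases (at $q=1$ one is simply counting all of $NC_n$). Rather than differentiating the explicit radical formulas of Corollary \ref{1a1b2cor1}, it is cleaner to differentiate the quadratic functional equations those formulas satisfy. Specializing \eqref{th1a1b2e1} to $a=b=m$ with the parameter $q$ there set to $1$, and then renaming $p$ to $q$, shows that the generating function for $1^m$ satisfies $aF^2-bF+b=0$ with $a=x-qx^2+(q-1)x^m$ and $b=1-qx+(q-1)x^m$; setting instead $p=1$ (with $a=b=m$) shows that the generating function for $1^m2$ satisfies $aF^2-bF+1=0$ with $a=x+(q-1)x^m$ and $b=1+(q-1)x^m$.

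Implicit differentiation with respect to $q$ then gives $\partial_q F=\bigl(b_q(F-1)-a_qF^2\bigr)/(2aF-b)$ in the first case and $\partial_q F=-x^mF(F-1)/(2aF-b)$ in the second, since here $a_q=b_q=x^m$. Next I would substitute $q=1$, using $F(x,1)=C(x)$, the identity $2xC(x)-1=-\sqrt{1-4x}$ (so the denominators become $-(1-x)\sqrt{1-4x}$ and $-\sqrt{1-4x}$, respectively), and the defining relation $C(x)=1+xC(x)^2$, i.e.\ $C(x)^2=(C(x)-1)/x$, to collapse the numerators. After this simplification one obtains
\[
\partial_q F|_{q=1}=\frac{x^m C(x)^2}{\sqrt{1-4x}}\ \ \text{for } 1^m,\qquad
\partial_q F|_{q=1}=\frac{x^{m+1} C(x)^3}{\sqrt{1-4x}}\ \ \text{for } 1^m2.
\]

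To finish, I would invoke the classical expansion $\dfrac{C(x)^k}{\sqrt{1-4x}}=\sum_{n\geq0}\binom{2n+k}{n}x^n$ (provable, for instance, by Lagrange inversion, or by induction on $k$ from $C=1+xC^2$) with $k=2$ and $k=3$. Extracting $[x^n]$ gives $\binom{2(n-m)+2}{n-m}$ and $\binom{2(n-m)+1}{n-m-1}$ for $n$ large enough, and reindexing by $r=n-m+1$ converts these into $\binom{2r}{r-1}=\binom{2r}{r+1}$ and $\binom{2r-1}{r-2}=\binom{2r-1}{r+1}$, which are the asserted formulas. The only real obstacle is the algebraic simplification in the middle step — arranging that the numerators, which carry the powers $x^m,x^{m-1}$ and factors of $C$, telescope — but the substitution $C^2=(C-1)/x$ makes this work uniformly in $m\geq1$, so no separate handling of small $m$ (e.g.\ $m=1,2$) is required.
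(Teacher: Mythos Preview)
Your argument is correct. The algebra checks out cleanly: at $q=1$ one has $a=x(1-x)$, $b=1-x$ in the first case and $a=x$, $b=1$ in the second, so $2aF-b$ reduces to $-(1-x)\sqrt{1-4x}$ and $-\sqrt{1-4x}$ respectively, and the numerators collapse exactly as you claim once $C-1=xC^2$ is substituted. The coefficient extraction via $\dfrac{C(x)^k}{\sqrt{1-4x}}=\sum_{n\geq0}\binom{2n+k}{n}x^n$ and the reindexing to $r=n-m+1$ are also right, including at the boundary $n=m$.

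However, your route is \emph{not} the one the paper actually writes out. The sentence preceding the corollary does say the formulas arise from differentiating the expressions in Corollary~\ref{1a1b2cor1} and extracting coefficients --- exactly your method, which you streamline by differentiating the quadratic relation implicitly rather than the radical directly --- but the proof the paper presents is purely combinatorial. For $1^m$, it observes that an occurrence of $1^m$ in $NC_n$ is obtained uniquely by choosing a position in a member of $NC_r$ (with $r=n-m+1$) and inflating the letter there by $m-1$ extra copies, giving $rC_r=\binom{2r}{r+1}$ occurrences. For $1^m2$, the same inflation shows that counting occurrences in $NC_n$ is equivalent to counting ascents in $NC_r$; since the number of ascents in a non-crossing partition is one less than its number of blocks, and the block statistic on $NC_r$ is Narayana-distributed with total $\binom{2r-1}{r}$, one gets $\binom{2r-1}{r}-C_r=\binom{2r-1}{r+1}$. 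Your analytic argument is self-contained and uniform in $m$, requiring only the standard identity for $C^k/\sqrt{1-4x}$; the paper's bijective argument, on the other hand, explains \emph{why} the answers are tied to position counts and ascent counts on $NC_r$, and in particular identifies the appearance of the Narayana numbers in the $1^m2$ case.
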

\begin{proof}
It is also possible to provide a combinatorial explanation of these formulas.  For the first,  suppose that there is a letter $x$ in the $i$-th position within a member of $NC_r$, where $1 \leq i \leq r$.  Then insert $m-1$ additional copies of $x$ to directly follow the one already present in position $i$ and mark the occurrence of the subword $1^m$ in the resulting member of $NC_n$.  Note that all occurrences of $1^m$ within members of $NC_n$ arise uniquely in this way, which yields $rC_r=\binom{2r}{r+1}$ total occurrences. For the second formula, consider an ascent $xy$ in $\pi \in NC_r$ and insert $m-1$ additional copies of $x$ between $x$ and $y$.  This results in an occurrence of $\tau=1^m2$ within a member of $NC_n$ in which the role of the `$2$' is played by $y$.  Thus, counting occurrences of $\tau$ in $NC_n$ is equivalent to counting ascents in $NC_r$.  Note that the number of ascents in a non-crossing partition $\pi$ equals $\mu(\pi)-1$ for all $\pi$, where $\mu(\pi)$ denotes the number of blocks of $\pi$.  Since $\mu$ has a Narayana distribution on $NC_r$ (see, e.g., \cite[A001263]{Sloane}), it follows that the number of blocks over $NC_r$ equals $\sum_{i=1}^{r}\frac{i}{r}\binom{r}{i}\binom{r}{i-1}=\binom{2r-1}{r}$.  Since the number of ascents is always one less than the number of blocks, we have that the total number of occurrences of $\tau$ in all the members of $NC_n$ is given by $\binom{2r-1}{r}-C_r=\binom{2r-1}{r+1}$.
\end{proof}

Given a sequence $\rho$ and a number $x$, let $\rho+x$ denote the sequence obtained by adding $x$ to each entry of $\rho$.  Let $\tau=(\rho+1)1^b$, where $\rho$ is a sequential representation of a non-crossing partition of length $a \geq 1$.  Assume further that $\rho$ starts with a single 1 if $b \geq 2$ (with no such restriction if $b=1$).  Then we have the following general formula for $F_\tau$.

\begin{theorem}\label{thtau1b}
Let $\tau=(\rho+1)1^b$, where $\rho$ is of length $a \geq 1$ as described and $b \geq 1$.  Then the generating function counting the members of $NC_n$ for $n \geq0$ according to the number of occurrences of $\tau$ is given by
$$\frac{1+2(q-1)x^{a+b-1}-\sqrt{1-4x-4(q-1)x^{a+b}}}{2x(1+(q-1)x^{a+b-2})}.$$
\end{theorem}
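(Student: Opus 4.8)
The plan is to show that $F=F_\tau$ satisfies the quadratic functional equation
\begin{equation*}
x\bigl(1+(q-1)x^{a+b-2}\bigr)F^{2}=\bigl(1+2(q-1)x^{a+b-1}\bigr)F-1-(q-1)x^{a+b-1},
\end{equation*}
and then to solve it by the quadratic formula: the discriminant collapses to $1-4x-4(q-1)x^{a+b}$, and the branch with $F(0)=1$ is precisely the stated expression. (Two consistency checks: at $q=1$ the equation reads $xF^{2}=F-1$, i.e. $F=C(x)$; and for $a+b=3$ it recovers the rows for $211$ and $221$ in Table~\ref{tab1}.)

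To obtain the functional equation, the cleanest route is first to invoke the length-preserving bijection of this section, which shows $F_\tau$ depends only on $a+b$; taking $\rho=1$ (which trivially ``starts with a single $1$'') we may then assume $\tau=21^{m-1}$ with $m=a+b$. (Without that bijection one argues directly with general $\rho$, at the cost of tracking the generating function for non-crossing partitions whose terminal factor of length $a$ is order-isomorphic to $\rho$; a short restricted-growth argument shows this depends only on $a$, again reducing to $\tau=21^{m-1}$.) Now use the first-return decomposition: every nonempty non-crossing partition $\pi$ is written uniquely as $\pi=1^{c_{0}}U_{1}1^{c_{1}}U_{2}1^{c_{2}}\cdots U_{h}1^{c_{h}}$ with $h\ge0$, $c_{0},\dots,c_{h-1}\ge1$, $c_{h}\ge0$, where the copies of $1$ are exactly the elements of the block containing $1$ and $U_{1},\dots,U_{h}$ are, up to relabeling, arbitrary non-crossing partitions occupying successive increasing blocks of values $\ge2$ (equivalently, the $U_j$ are the nonempty $W_{i}$ in the standard decomposition $\pi=1W_{1}1W_{2}\cdots1W_{r}$, from which $C(x)=1+xC(x)^{2}$).

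The combinatorial heart of the argument is the identity
\begin{equation*}
\mu_\tau(\pi)=\sum_{j=1}^{h}\mu_\tau(U_{j})+\#\{\,j\in[h]:c_{j}\ge m-1\,\}.
\end{equation*}
Indeed, an occurrence of $21^{m-1}$ not lying inside a single $U_{j}$ has its block of $m-1$ equal letters made up of $1$'s; the letter immediately before that block must be strictly larger than $1$, hence is the (necessarily $\ge2$) final letter of some $U_{j}$, so the occurrence is the window formed by that final letter of $U_{j}$ together with the first $m-1$ letters of the maximal run $1^{c_{j}}$ following it. In particular exactly one such occurrence arises from each $j$ with $c_{j}\ge m-1$ (the forced descent into the run prevents the window from beginning later), and no occurrence straddles a boundary $1^{c_{0}}\!\mid\!U_{1}$, $U_{j}\!\mid\!1^{c_{j}}$ or $1^{c_{j}}\!\mid\!U_{j+1}$ in any other way. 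Translating to generating functions, a block $U_{j}1^{c_{j}}$ with $1\le j\le h-1$ contributes $(F-1)\bigl(\frac{x-x^{m-1}}{1-x}+\frac{qx^{m-1}}{1-x}\bigr)=(F-1)\,\frac{x+(q-1)x^{m-1}}{1-x}$, the final block $U_{h}1^{c_{h}}$ contributes $(F-1)\,\frac{1+(q-1)x^{m-1}}{1-x}$, and the leading run $1^{c_{0}}$ contributes $\frac{x}{1-x}$; summing over $h\ge0$ and adding $1$ for the empty partition gives, with $D:=\frac{x+(q-1)x^{m-1}}{1-x}$,
\begin{equation*}
F=1+\frac{x}{1-x}+\frac{x}{1-x}\cdot\frac{(F-1)(D+1)}{1-(F-1)D}.
\end{equation*}
Since $D+1=\frac{1+(q-1)x^{m-1}}{1-x}$, writing $G=F-1$ simplifies this first to $F=1+\frac{x}{1-x}\cdot\frac{F}{1-GD}$ and then to $G^{2}\bigl(x+(q-1)x^{m-1}\bigr)=G(1-2x)-x$; substituting $G=F-1$, expanding, and using $x+(q-1)x^{m-1}=x(1+(q-1)x^{m-2})$ produces the displayed quadratic.

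I expect the main obstacle to be the justification of the $\mu_\tau$-identity: one must verify that every occurrence of $\tau$ is either internal to some $U_{j}$ or of the described boundary type, and that each qualifying run $1^{c_{j}}$ yields exactly one occurrence rather than $c_{j}-m+2$. Granting that, the passage to the functional equation, the algebraic reduction, and the solution of the quadratic are all routine.
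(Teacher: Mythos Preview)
Your argument is correct, but it is organized differently from the paper's proof. The paper works directly with a general $\rho$: it introduces an auxiliary generating function $G$ that counts occurrences of $\tau$ in $\pi0^{b}$ (i.e., in $\pi$ with $b$ phantom small letters appended), proves the relation $G=F+(q-1)x^{a-1}(F-1)$ by showing that the $q$-weights of $F$ and $G$ differ precisely on partitions whose final $a$ letters are isomorphic to $\rho$ (and that these are enumerated by $x^{a-1}(F-1)$ irrespective of $\rho$, using the ``$\rho$ starts with a single $1$'' hypothesis), and then decomposes $\pi$ only as far as the first non-$1$ block and the next run of $1$'s. This yields the quadratic $(1-2x)U-x=x(1+(q-1)x^{a+b-2})U^{2}$ for $U=F-1$ without ever specializing $\rho$.

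You instead front-load the bijection $\tau\equiv\tau'$ (which in the paper is proved \emph{after} the theorem, as an independent combinatorial confirmation of the length-only dependence) to reduce to $\tau=21^{m-1}$, and then run the full first-return decomposition over \emph{all} maximal $1$-runs simultaneously. Your $\mu_\tau$-identity is the clean payoff of that reduction: once $a=1$, the ``terminal segment isomorphic to $\rho$'' condition becomes vacuous and each boundary contributes exactly one marked occurrence when $c_{j}\ge m-1$. Both routes land on the same quadratic for $U=F-1$; the paper's has the advantage of being self-contained for arbitrary $\rho$ (so the equivalence $\tau\equiv\tau'$ drops out as a corollary of the formula), while yours is algebraically tidier but relies on having the bijection in hand first. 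Your parenthetical ``direct'' alternative for general $\rho$ is in fact precisely the content of the paper's relation $G=F+(q-1)x^{a-1}(F-1)$.
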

\begin{proof}
Let $G=G_\tau$ be the gf that enumerates $\pi \in NC_n$ for $n \geq 0$ according to the number of occurrences of $\tau$ in $\pi0^b$ and $F=F_\tau$ denote the usual gf.  We first establish the relation
\begin{equation}\label{thtau1be1}
G=F+(q-1)x^{a-1}(F-1).
\end{equation}
To do so, first note that $F$ and $G$ assign the same $q$-weights to non-crossing partitions except for those of the form $\pi=\alpha\beta$, where $\beta$ corresponds to an occurrence of the subword $\rho$.  We now describe how such partitions can be formed.  Let $x$ denote the first letter of $\beta$.  Let $\rho=\rho_1\rho_2\cdots \rho_a$ and $\rho'=\rho_2\cdots \rho_a$.  Let $\rho^*$ be the sequence obtained from $\rho'$ by replacing each $1$ in $\rho'$ with $x$ and each letter $i>1$  with $i+m-1$, where $m=\max(\alpha \cup\{x\})$. Then appending $\rho^*$ to the partition $\alpha x$ gives $\pi$ of the form stated above, with $\alpha x$ representing an arbitrary  member of $NC_n$ for some $n \geq 1$.  Further, since $\rho$ starts with a single $1$ if $b>1$, we have that appending $\rho^*$ as described to $\alpha x$ does not introduce an occurrence of $\tau$ involving the last letters of $\alpha x$ and the first of $\rho^*$ (as $\rho^*$ must start with $m+1$ if nonempty when $b>1$). Then $F$ and $G$ differ with respect to the assigned $q$-weight (only) on partitions of the form $\pi=\alpha x \rho^*$, where $\alpha$ and $\rho^*$ are as described.  Such $\pi$ are enumerated by $x^{a-1}(F-1)$, since $\alpha x$ is non-empty and arbitrary and the $a-1$ appended letters comprising $\rho^*$ are determined once $\alpha$ is specified.  Subtracting the weight of such $\pi$ from the count for $G$, and adding them back with an extra factor of $q$, implies \eqref{thtau1be1}.

We now write a formula for $F$.  To do so, note that $\pi \in NC_n$ for some $n \geq 0$ may be expressed as (i) $\pi=1^n$, (ii) $\pi=1^r\alpha$, where $r \geq 1$ and $\alpha$ is nonempty and does not contain $1$, (iii) $\pi=1^r\alpha 1^s\beta$, where $0 \leq s \leq b-2$ and $\beta$ is nonempty and starts with exactly one $1$, (iv) $\pi=1^r\alpha 1^{b-1}\beta$, where $\beta$ is nonempty but may start with any positive number of $1$'s in this case. Note that the gf for all nonempty non-crossing partitions starting with a single $1$ according to the number of occurrences of $\tau$ is given by $F-1-x(F-1)=(1-x)(F-1)$, by subtraction.  Hence, case (iii) is seen to contribute $\frac{x}{1-x}(F-1)(1+x+\cdots+x^{b-2})(1-x)(F-1)$ towards $F$ if $b \geq 2$, with (iii) not applicable (i.e., it is subsumed by (iv)) if $b=1$.  In case (iv), one gets a contribution of $\frac{x}{1-x}(G-1)x^{b-1}(F-1)$ towards $F$ for all $b \geq 1$, where the $G-1$ factor accounts for the nonempty section $\alpha$, as it is followed by (at least) $b$ letters $1$.  Combining (i)--(iv) then gives
\begin{equation}\label{thtau1be2}
F=\frac{1}{1-x}+\frac{x}{1-x}(F-1)+\frac{x-x^b}{1-x}(F-1)^2+\frac{x^b}{1-x}(F-1)(G-1).
\end{equation}

To solve \eqref{thtau1be1} and \eqref{thtau1be2}, it is easier to consider $U=F-1$.  Then \eqref{thtau1be1} implies $G-1=(1+(q-1)x^{a-1})U$ and thus \eqref{thtau1be2} may be rewritten as
\begin{equation}\label{thtau1be3}
U=\frac{x}{1-x}\left(1+U+(1-x^{b-1})U^2+x^{b-1}(1+(q-1)x^{a-1})U^2\right).
\end{equation}
Solving for $U$ in  \eqref{thtau1be3} gives
$$U=\frac{1-2x-\sqrt{1-4x-4(q-1)x^{a+b}}}{2x(1+(q-1)x^{a+b-2})},$$
which implies the desired formula for $F=U+1$.
\end{proof}

Note that the formula for $F_\tau$ in Theorem \ref{thtau1b} depends only on the length of the subword $\tau$.  A bijective proof showing the equivalence of $\tau$ and $\tau'$ of the same length is given below.  In particular, when $|\tau|=3$, we have $211\equiv 221$ as subwords on $NC_n$ for all $n$, with the common gf formula given by
$$\frac{1+2(q-1)x^2-\sqrt{1-4x-4(q-1)x^3}}{2x(1+(q-1)x)}.$$

Differentiating the formula in Theorem \ref{thtau1b} gives the following.
\begin{corollary}\label{thtau1bc1}
If $n \geq a+b-1$, then the total number of occurrences of $\tau=(\rho+1)1^b$ as described above within all the members of $NC_n$ is given by $\binom{2r-2}{r+1}$, where $r=n-a-b+2$.
\end{corollary}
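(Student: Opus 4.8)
The plan is to read off the total-occurrences generating function as $\left.\partial_q F_\tau\right|_{q=1}$ from the closed form in Theorem~\ref{thtau1b} and then extract the coefficient of $x^n$. Set $N=a+b$, so that the formula there reads
\[
F_\tau=\frac{1+2(q-1)x^{N-1}-\sqrt{1-4x-4(q-1)x^N}}{2x\bigl(1+(q-1)x^{N-2}\bigr)};
\]
note that $F_\tau$ depends on $a,b$ only through $N$, and that $F_\tau|_{q=1}=C(x)$, in agreement with $|NC_n|=C_n$.

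First I would expand $F_\tau$ to first order in $q-1$ about $q=1$. Writing $\sqrt{1-4x-4(q-1)x^N}=\sqrt{1-4x}-\frac{2(q-1)x^N}{\sqrt{1-4x}}+O\bigl((q-1)^2\bigr)$ and $\frac{1}{2x(1+(q-1)x^{N-2})}=\frac{1}{2x}-\frac{(q-1)x^{N-3}}{2}+O\bigl((q-1)^2\bigr)$, then multiplying out and collecting the coefficient of $q-1$, a short simplification (placing everything over $\sqrt{1-4x}$ and using $1-4x=(\sqrt{1-4x})^2$) yields
\[
\left.\frac{\partial F_\tau}{\partial q}\right|_{q=1}=\frac{x^{N-3}}{2}\left(\frac{1-4x+2x^2}{\sqrt{1-4x}}-(1-2x)\right).
\]
Since $\left.\partial_q F_\tau\right|_{q=1}=\sum_{n\geq0}\bigl(\sum_{\pi\in NC_n}\mu_\tau(\pi)\bigr)x^n$, the total number of occurrences of $\tau$ over $NC_n$ is the coefficient of $x^n$ on the right.

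Next, using $\frac{1}{\sqrt{1-4x}}=\sum_{k\geq0}\binom{2k}{k}x^k$, I would extract that coefficient. With $r=n-a-b+2$, it equals $\tfrac12$ times the coefficient of $x^{r+1}$ in $\frac{1-4x+2x^2}{\sqrt{1-4x}}-(1-2x)$; for $n\geq a+b-1$ we have $r+1\geq2$, so the polynomial $(1-2x)$ contributes nothing and the count becomes $\frac12\bigl(\binom{2r+2}{r+1}-4\binom{2r}{r}+2\binom{2r-2}{r-1}\bigr)$. It then remains to verify the elementary identity
\[
\tfrac12\Bigl(\tbinom{2r+2}{r+1}-4\tbinom{2r}{r}+2\tbinom{2r-2}{r-1}\Bigr)=\tbinom{2r-2}{r+1}\qquad(r\geq1),
\]
which, after dividing each term by $\binom{2r-2}{r-1}$, reduces to the polynomial identity $4(2r+1)(2r-1)-8(2r-1)(r+1)+2r(r+1)=2(r-1)(r-2)$, i.e.\ $2r^2-6r+4=2r^2-6r+4$; the cases $r=1,2$, where $\binom{2r-2}{r+1}=0$, may alternatively be checked directly.

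The computation is routine throughout; the only points requiring a little care are tracking the $\frac{x^N}{\sqrt{1-4x}}$ term produced by differentiating the square root in the first-order expansion, and observing that the bracketed series $\frac{1-4x+2x^2}{\sqrt{1-4x}}-(1-2x)$ has vanishing constant term, so that multiplying by $x^{N-3}$ (note $N=2$ is possible) still yields a genuine power series. Neither is a real obstacle.
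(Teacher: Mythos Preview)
Your proposal is correct and follows exactly the approach the paper indicates: differentiate the closed form for $F_\tau$ from Theorem~\ref{thtau1b} with respect to $q$ at $q=1$ and extract the coefficient of $x^n$. The paper gives no further details beyond the phrase ``Differentiating the formula in Theorem~\ref{thtau1b} gives the following,'' so your computation simply fills in the routine steps; the first-order expansion, the resulting expression $\tfrac{x^{N-3}}{2}\bigl(\tfrac{1-4x+2x^2}{\sqrt{1-4x}}-(1-2x)\bigr)$, and the binomial identity all check out.
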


\emph{Remark:} For each $m \geq1$, we have from Corollary \ref{1a1b2cor2} that the nonzero values in the sequences for the total number of occurrences of $1^m$ and $1^m2$ in $NC_n$ for $n \geq 1$ correspond respectively to A001791 and A002054 in \cite{Sloane}. Corollary \ref{thtau1bc1} implies the total number of occurrences of $(\rho+1)1^b$ corresponds to A002694. \smallskip

Suppose $\tau=(p+1)1^b$ is as described above with $|\rho|=a$ and $\tau'=(\rho'+1)1^{b'}$, where $\rho'$ is of length $a' \geq 1$ and satisfies the same requirements as $\rho$ above, $b' \geq 1$ and $a'+b'=a+b$.\medskip

\textbf{Bijective proof of $\tau \equiv \tau'$ as subwords on $NC_n$:}\medskip

Clearly, we may assume $|\tau|=a+b \geq 3$.  We first prove the result when $b=b'=1$.  Let $\pi \in NC_n$, represented sequentially.  Let $\bf{s}$ denote a string of $\pi$ of the form ${\bf s}=u\alpha v$, where $\alpha \neq \varnothing$ and $1 \leq v<u\leq \min(\alpha)$.  If $\bf{s}$ corresponds to an occurrence $\tau$ ($\tau'$), then we will refer to $\bf{s}$ as an $\tau$-\emph{string} ($\tau'$-\emph{string}, respectively).  We wish to define a bijection $f$ on $NC_n$ in which partitions containing a given number of $\tau$-strings are mapped to those containing the same number of $\tau'$-strings, and vice versa.    If no $\tau$- or $\tau'$-strings exist (i.e, if $\pi$ avoids both $\tau$ and $\tau'$ as subwords), then let $f(\pi)=\pi$. So let $x_1,x_2,\ldots,x_r$ where $r \geq 1$ denote the complete combined set of $\tau$- and $\tau'$-strings in a left-to-right scan of the sequence $\pi$.  Note that since $b=b'=1$, the adjacent strings $x_i$ and $x_{i+1}$ for some $1 \leq i \leq r-1$ are either disjoint or share a single letter.

We now change each $x_i$ to the other option regarding containment of $\tau$ or $\tau'$.  We will first change $x_1$ and then subsequently work on $x_2,x_3,\ldots,x_r$, going from left to right.  Suppose first that $x_1$ is a $\tau$-string.  Then we will change $x_1$ to a $\tau'$-string $y_1$ as follows.  Similar reasoning will apply to the case when $x_1$ is a $\tau'$-string. Suppose $\tau$ has $s+1$ distinct letters, where $s \geq 1$, and that the $\tau$-string $x_1$ makes use of the actual letters $v<u=u_1<u_2<\cdots<u_s$.  Note that $\rho$ a partition and $\pi$ non-crossing implies $u_2,\ldots,u_s$ represent the leftmost occurrences of the letters of their respective kinds within $\pi$ and hence $u_\ell=u_2+\ell-2$ for $2 \leq \ell \leq s$.  Suppose $\tau'$ has $t+1$ distinct letters, where $t \geq 1$.  If $s \geq t$, then replace the letters in $x_1$ with a sequence that is isomorphic to $\tau'$ in which the roles of $1,2,\ldots,t+1$ are played by $v<u_1<\cdots<u_{t}$.  Further, if $s>t$, then the letters $u_{t+1}<\cdots <u_s$ are not needed in this replacement, in which case, we reduce each letter of $\pi$ belonging to $\{u_{s}+1,u_s+2,\ldots\}$, all of which must necessarily occur to the right of $x_1$ within $\pi$, by the amount $s-t$. Note that $\pi$ non-crossing and $\tau$ starting with $2$ and ending in $1$ implies that the letters $u_{t+1},\ldots,u_s$ within $x_1$ do not occur elsewhere in $\pi$.

On the other hand, if $t>s$, then we use all of the distinct letters occurring in $x_1$, together with $u_s+1,\ldots,u_s+t-s$, when performing the replacement. In this case, we must increase any letters of $\pi$ greater than or equal to $u_s+1$, all of which must occur to the right of $x_1$, by the amount $t-s$ in order to accommodate the new letters used.  In all cases, let $y_1$ denote the $\tau'$-string that results from making the replacement as described and let $\pi_1$ be the resulting member of $NC_n$.  Note that the combined set of $\tau$- and $\tau'$-strings in $\pi_1$ is given by $y_1,x_2,\ldots,x_r$.  We then repeat the process described above on $\pi_1$ in replacing $x_2$ with a string $y_2$ that represents the other option concerning containment of $\tau$ or $\tau'$, and let $\pi_2$ denote the resulting member of $NC_n$.  Likewise, we continue with $x_3,\ldots,x_r$, and convert them sequentially to $y_3,\ldots,y_r$, letting $\pi_3,\ldots,\pi_r$ denote the corresponding partitions that arise.

Let $f(\pi)=\pi_r$ and we show that $f$ can be reversed.  To do so, first note that the positions of the first and last letters of the strings $y_1,\ldots,y_r$ in $\pi_r$ are the same as the corresponding positions  within $x_1,\ldots,x_r$ in $\pi$, as they are seen to be invariant in each step of the transition from $\pi$ to $\pi_r$.  This follows from the fact that the first and last letters within an occurrence $z$ of $\tau$ or $\tau'$ are the two smallest letters in $z$.  Therefore, the inverse of $f$ may be found by reversing each of the transitions $\pi_i$ to $\pi_{i+1}$ for $0 \leq i \leq r-1$, where $\pi_0=\pi$, in reverse order (i.e., starting with the $i=r-1$ transition and ending with $i=0$).  Hence, we have $\mu_\tau(\pi)=\mu_{\tau'}(f(\pi))$ for all $\pi \in NC_n$ when it is assumed $b=b'=1$.

To complete the proof, it then suffices to show $2\sigma1^b\equiv 2^b\sigma 1$, where $b \geq 2$ and $2\sigma$ is a nonempty non-crossing partition (using the letters in $\{2,3,\ldots\}$) such that $\sigma$ starts with $3$ if nonempty. To establish this equivalence, let $\pi=\pi_1\cdots \pi_n \in NC_n$ and we consider (maximal) strings $\textbf{p}$ within $\pi$ of the form
$$\textbf{p}=u_1^{r_1}\sigma_1u_2^{r_2}\sigma_2\cdots u_t^{r_t}\sigma_tu_{t+1}^{r_{t+1}},$$
where $t,r_1,\ldots,r_t \geq 1$, $r_{t+1} \geq 0$, $u_1>u_2>\cdots>u_t$ (with $u_t>u_{t+1}$ if $r_{t+1}>0$ and $u_{t+1}=1$ if $r_{r+1}=\varnothing$) and $u_i\sigma_i$ isomorphic to $2\sigma$ for $1 \leq i \leq t$.  Note that if $r_{t+1}=0$, then either $u_t\sigma_t$ contains the last letter of $\pi$ or the successor of the final letter of $u_t\sigma_t$ is greater than or equal $u_t$ if $\sigma$ is nonempty (with the successor being strictly greater if $\sigma$ is empty).  Further, if $r_{t+1}>0$, then it is understood that $\sigma$ is nonempty and that the string $u_{t+1}^{r_{t+1}}$ is not directly followed by a sequence of letters $\alpha$ such that $u_{t+1}\alpha$ is isomorphic to $2\sigma$.  We replace each such string $\textbf{p}$ with $\textbf{p}'$, where
$$\textbf{p}'=\begin{cases}u_1^{r_{t+1}}\sigma_1u_2^{r_t}\sigma_2\cdots u_t^{r_2}\sigma_tu_{t+1}^{r_1}, \text{ if }r_{t+1}>0,\\u_1^{r_{t}}\sigma_1u_2^{r_{t-1}}\sigma_2\cdots u_t^{r_1}\sigma_tu_{t+1}^{r_{t+1}}, \text{ if } r_{t+1}=0. \end{cases}$$
Let $g(\pi)$ denote the member of $NC_n$ that results from replacing each string $\textbf{p}$ with $\textbf{p}'$ as described.  Then $g$ is an involution on $NC_n$ that replaces each occurrence of the pattern $2\sigma1^b$ with $2^b\sigma1$ and vice versa, which implies the desired equivalence and completes the proof.
\hfill \qed \medskip

\emph{Remarks:} When $|\tau|=|\tau'|=3$, then the bijection $f$ above shows $231 \equiv 221$.  For example, let $\pi=1\underline{231}1\underline{451}6\underline{78}\underline{\overline{6}}\overline{61}9 \in NC_{15}$, where the occurrences of 231 and 221 are underlined and overlined, respectively.  Then we have
\begin{align*}
\pi_0&\rightarrow \pi_1=1\overline{221}1\underline{341}5\underline{67}\underline{\overline{5}}\overline{51}8 \rightarrow \pi_2=1\overline{221}1\overline{331}4\underline{56}\underline{\overline{4}}\overline{41}7 \rightarrow \pi_3=1\overline{221}1\overline{331}4\overline{55441}6\\
&\rightarrow \pi_4=1\overline{221}1\overline{331}4\overline{55}\underline{\overline{4}}\underline{61}7,
\end{align*}
and thus $f(\pi)=\pi_4 \in NC_{15}$.  Note that $\pi$ has three occurrences of 231 and one of 221, whereas $f(\pi)$ has three occurrences of 221 and one of 231.  When $\tau$ and $\tau'$ are each of length three, the bijection $g$ shows $221 \equiv 211$. For example, if $n=12$ and $\pi=122322114115\in NC_{12}$, then  $g(\pi)=122332214415$. Note that $\pi$ and $g(\pi)$ contain one and three and three and one occurrences respectively of $221$ and $211$.  Finally, the mapping $g$ is seen to preserve the number of blocks of a partition, whereas $f$ does not in general. \smallskip

In the next result, we enumerate members of $NC_n$ with respect to a family of subword patterns generalizing  $121$.

\begin{theorem}\label{1arho1b}
Let $\tau=1^a(\rho+1)1^b$, where $a,b \geq 1$ and $\rho$ is the sequential representation of a non-crossing partition of length $m$ for some $m \geq 1$. Then the generating function counting the members of $NC_n$ for $n \geq0$ according to the number of occurrences of $\tau$ is given by
$$\frac{\left(1-x+(1-q)(1-x^s)x^{m+t}\right)\left(1-\sqrt{1-\frac{4x(1-x+(1-q)(1-x^{s-1})x^{m+t})}{1-x+(1-q)(1-x^s)x^{m+t}}}\right)}{2x\left(1-x+(1-q)(1-x^{s-1})x^{m+t}\right)},$$
where $s=\min\{a,b\}$ and $t=\max\{a,b\}$.
\end{theorem}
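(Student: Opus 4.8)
The plan is to combine the ``runs of ones'' decomposition of non-crossing partitions with the symbolic method, as in the proofs of Theorems \ref{th1a1b2} and \ref{thtau1b}. First I would record the structural fact: a nonempty $\pi\in NC_n$ can be written uniquely as $\pi=1^{r_1}\mu_1 1^{r_2}\mu_2\cdots 1^{r_k}\mu_k$ with $k\ge1$, each $r_i\ge1$, each $\mu_i$ free of the letter $1$, with $\mu_1,\dots,\mu_{k-1}$ nonempty and $\mu_k$ possibly empty; because $\pi$ is non-crossing, no block can straddle an occurrence of $1$, so each $\mu_i$ (after order-preserving relabelling) is an arbitrary non-crossing partition and the $\mu_i$ are mutually independent. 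The resulting identity $C(x)-1=\tfrac{xC(x)}{1-xC(x)}$ recovers $C(x)=1+xC(x)^2$, so this is a bijection.

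Next I would locate $\tau=1^a(\rho+1)1^b$ inside such a $\pi$. An occurrence of $\tau$ has a least letter $c$; if $c\ge2$ the occurrence, having no $1$, lies entirely inside a single $\mu_i$ and contributes to $\mu_\tau(\mu_i)$. If $c=1$, then — since $\rho+1$ has no $1$ — the ``$\rho$-part'' of the occurrence consists of consecutive letters lying strictly between two consecutive occurrences of $1$ in $\pi$, and non-crossingness forces such a stretch to be one of the $\mu_i$ in its entirety; hence the occurrence exists precisely when $|\mu_i|=m$, $\mu_i$ is order-isomorphic to $\rho$, and $\mu_i$ is flanked by runs of ones with $r_i\ge a$ and $r_{i+1}\ge b$, and each such $i\in[k-1]$ then yields exactly one occurrence. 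Thus
$$\mu_\tau(\pi)=\sum_{i=1}^k\mu_\tau(\mu_i)+\#\bigl\{\,i\in[k-1]\ :\ \mu_i\cong\rho,\ r_i\ge a,\ r_{i+1}\ge b\,\bigr\}.$$

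To turn this into generating functions I would write $R=\tfrac{x}{1-x}$ and $R_{\ge c}=\tfrac{x^c}{1-x}$ for the generating functions of a run $1^r$ with $r\ge1$, resp.\ $r\ge c$, set $M=F-1$, and — because the bonus at $\mu_i$ couples the run before it with the run after it — introduce two auxiliary series $G$ and $H$ for the ``tail'' $\mu_j1^{r_{j+1}}\cdots1^{r_k}\mu_k$ of $\pi$ (weighted by $q^{\mu_\tau}$) according as the preceding run has length $\ge a$ or $<a$. The decomposition yields
$$F=1+R_{\ge a}G+(R-R_{\ge a})H,\qquad H=F+M\bigl(R_{\ge a}G+(R-R_{\ge a})H\bigr),$$
together with a longer equation for $G$ obtained by splitting the first $\mu$ of the tail into the cases $\mu\cong\rho$ (contribution $x^m$, with a bonus exactly when the following run has length $\ge b$) and $\mu\not\cong\rho$, and splitting each run of ones according to which of the thresholds $a,b$ it meets. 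From the first equation $R_{\ge a}G+(R-R_{\ge a})H=M$, whence $H=F+M^2=F^2-F+1$; substituting this and using the identity $R_{\ge s}+R_{\ge t}=R_{\ge a}+R_{\ge b}$ (valid since $\{s,t\}=\{a,b\}$) collapses the equation for $G$ to the linear relation
$$G\bigl(1-(q-1)x^mR_{\ge t}\bigr)=H\bigl(1+(q-1)x^m(R_{\ge b}-R_{\ge t})\bigr).$$
Eliminating $G$ and $H$ and simplifying — using $m+s+t=|\tau|$ and the identity $A-xB=(1-x)\bigl(1-x-(q-1)x^{m+t}\bigr)$ with $A=1-x+(1-q)(1-x^s)x^{m+t}$ and $B=1-x+(1-q)(1-x^{s-1})x^{m+t}$ — gives $xBF^2=A(F-1)$; solving this quadratic and choosing the branch with $F(0)=1$ produces $F=\tfrac{A-\sqrt{A^2-4xAB}}{2xB}$, which is the asserted formula.

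The main obstacle is the bookkeeping in the two central steps: the careful use of non-crossingness to show that a ``$c=1$'' occurrence of $\tau$ must appear as a full block $\mu_i\cong\rho$ wedged between two long-enough runs of ones, and then setting up the equation for $G$ with the run-length case split performed correctly. A welcome feature is that the identity $R_{\ge s}+R_{\ge t}=R_{\ge a}+R_{\ge b}$ makes the computation go through uniformly in $a$ and $b$, so no separate treatment of $a\le b$ versus $a>b$ is needed (and this also explains why the answer is symmetric in $a$ and $b$); the remaining manipulations are routine.
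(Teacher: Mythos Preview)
Your argument is correct and follows the same core idea as the paper's proof --- the symbolic method applied to the ``runs of $1$'s'' decomposition of a non-crossing partition --- but your execution differs in a way worth noting. The paper refines $F$ by the \emph{exact} length of the initial run of $1$'s, introducing functions $F_i$ and writing separate recursions for $F_1$ and $F_a$; it then treats the cases $b\ge a>1$ and $a=1$ by direct computation, and handles $a>b$ only by exhibiting an explicit involution on $NC_n$ that swaps the roles of $a$ and $b$. Your two-state refinement $G$/$H$ (conditioning only on whether the preceding run meets the threshold $a$) is coarser but sufficient, and because you ultimately reduce to the linear relation $G\bigl(1-(q-1)x^mR_{\ge t}\bigr)=H\bigl(1+(q-1)x^m(R_{\ge b}-R_{\ge t})\bigr)$, together with the trivial identity $R_{\ge a}R_{\ge b}=R_{\ge s}R_{\ge t}$ (equivalently $a+b=s+t$), the computation goes through uniformly in $a$ and $b$ and the symmetry of the answer falls out automatically. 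What the paper's approach buys is the explicit bijection witnessing $1^a(\rho+1)1^b\equiv 1^b(\rho+1)1^a$, which you do not obtain; what your approach buys is a single computation with no case analysis on $\min\{a,b\}$ and no appeal to a bijection.
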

\begin{proof}
First assume $b \geq a>1$.  To find a formula for $F=F_\tau$ in this case, we refine $F$ by letting $F_i$ for $i \geq 1$ denote the restriction of $F$ to those partitions starting with a sequence of $1$'s of length exactly $i$.  Then we have $F_1=x+x(F-1)+x(F-1)^2=x(F^2-F+1)$, upon considering whether or not a partition enumerated by $F_1$ contains one or more runs of $1$.  By the definitions, we have $F_{i+1}=xF_i$ for all $i \neq a-1$, upon considering separately the cases $1 \leq i \leq a-2$ and $i \geq a$, since prepending an extra $1$ to a member of $NC_n$ not starting with a run of $1$ of length $a-1$ does not introduce an occurrence of $\tau$.  We now write a formula for $F_a$.  We consider the following cases on $\pi \in NC_n$ where $n \geq a$:  (i) $\pi=1^a\pi'$, where $\pi'$ contains no $1$'s and is possibly empty, (ii) $\pi=1^a\alpha\beta$, where $\alpha$ is nonempty and contains no $1$'s with $\alpha\neq \rho+1$ and $\beta$ is nonempty starting with $1$, (iii) $\pi=1^a\alpha\beta$, where $\alpha=\rho+1$ and $\beta$ is as before.  Note that $\beta$ in case (ii) is accounted for by $F-1$, whereas in (iii), we need
$$\sum_{i=1}^{b-1}F_i+q\sum_{i \geq b}F_i=\sum_{i=1}^{a-1}x^{i-1}F_1+\sum_{i=a}^{b-1}x^{i-a}F_a+q\sum_{i \geq b}x^{i-a}F_a=\frac{1-x^{a-1}}{1-x}F_1+\frac{1+(q-1)x^{b-a}}{1-x}F_a.$$

Thus, combining cases (i)--(iii), we have
$$F_a=x^aF+x^a(F-1-x^m)(F-1)+x^{m+a}\left(\frac{1-x^{a-1}}{1-x}F_1+\frac{1+(q-1)x^{b-a}}{1-x}F_a\right),$$
which implies
\begin{equation}\label{1arho1be1}
F_a=\frac{x^a(1+x^m)+x^a(F-1-x^m)F+\frac{x^{m+a}(1-x^{a-1})}{1-x}F_1}{1-\frac{x^{m+a}(1+(q-1)x^{b-a})}{1-x}}.
\end{equation}
We use the same cases (i)--(iii) in determining $F$ (except that the initial run of $1$'s can have arbitrary length in (i) and any length $\geq a$ in (ii) and (iii)), along with an additional case where $\pi$ is of the form $\pi=1^r\alpha\beta$, wherein $1 \leq r \leq a-1$ and $\alpha$ and $\beta$ are nonempty with $\alpha$ not containing $1$ and $\beta$ starting with $1$.  This yields
\begin{align}
F&=1+\frac{x}{1-x}F+\frac{x-x^a}{1-x}(F-1)^2+\frac{x^a}{1-x}(F-1-x^m)(F-1)\notag\\
&\quad+\frac{x^{m+a}}{1-x}\left(\sum_{i=1}^{b-1}F_i+q\sum_{i \geq b}F_i\right)\notag\\
&=1+\frac{x}{1-x}F+\frac{x-x^a}{1-x}(F-1)^2+\frac{x^a}{1-x}(F-1-x^m)(F-1)+\frac{x^{m+a}(1-x^{a-1})}{(1-x)^2}F_1\notag\\
&\quad+\frac{x^{m+a}(1+(q-1)x^{b-a})}{1-x}\cdot \frac{x^a(1+x^m)+x^a(F-1-x^m)F+\frac{x^{m+a}(1-x^{a-1})}{1-x}F_1}{1-x-x^{m+a}(1+(q-1)x^{b-a})},\label{1arho1be2}
\end{align}
where we have made use of \eqref{1arho1be1}.

Note that the $F_1$ coefficient in \eqref{1arho1be2} may be simplified to give
\begin{align*}
&\frac{x^{m+a}(1-x^{a-1})}{(1-x)^2}+\frac{x^{2(m+a)}(1-x^{a-1})(1+(q-1)x^{b-a})}{(1-x)^2(1-x-x^{m+a}(1+(q-1)x^{b-a}))}\\
&=\frac{x^{m+a}(1-x^{a-1})}{(1-x)^2}\left(1+\frac{x^{m+a}(1+(q-1)x^{b-a})}{1-x-x^{m+a}(1+(q-1)x^{b-a})}\right)\\
&=\frac{x^{m+a}(1-x^{a-1})}{(1-x)(1-x-x^{m+a}(1+(q-1)x^{b-a}))}.
\end{align*}
Thus, upon clearing fractions in \eqref{1arho1be2}, we have
\begin{align*}
(1-x-\ell)(F-1)&=x(1-x-\ell)F^2-x^{m+a}(1-x-\ell)(F-1)\\
&\quad+\ell x^a(1+x^m+(F-1-x^m)F)+x^{m+a}(1-x^{a-1})F_1,
\end{align*}
where $\ell=x^{m+a}(1+(q-1)x^{b-a})$.  By the formula for $F_1$, the last equation after several algebraic steps yields
$$(1-x+(1-q)(1-x^a)x^{m+b})(F-1)=x(1-x+(1-q)(1-x^{a-1})x^{m+b})F^2,$$
which leads to the stated formula for $F$ in this case.

Now let us consider the case $a=1$ and $b \geq 1$.  By similar reasoning as above, we have
\begin{align*}
F&=1+\frac{x}{1-x}F+\frac{x}{1-x}(F-1-x^m)(F-1)+\frac{x^{m+1}(1+(q-1)x^{b-1})}{(1-x)^2}F_1,\\
F_1&=xF+x(F-1-x^m)(F-1)+\frac{x^{m+1}(1+(q-1)x^{b-1})}{1-x}F_1.
\end{align*}
Solving this system for $F$ gives
$$F=\frac{1+(1-q)x^{m+b}-\sqrt{(1+(1-q)x^{m+b})(1-4x+(1-q)x^{m+b})}}{2x},$$
which establishes all cases of the formula when $b \geq a \geq 1$.

By a comparable argument, one can establish the stated formula for $F$ when $a>b \geq 1$.  Alternatively, note that the formula is symmetric in $a$ and $b$.  Thus, to complete the proof, it suffices to define a bijection on $NC_n$ showing that the $\mu_\tau$ statistic when $\tau=1^a(\rho+1)1^b$ has the same distribution as $\mu_{\tau'}$ for $\tau'=1^b(\rho+1)1^a$ where $a>b \geq 1$. By a \emph{maximal $\tau$-string} within $\pi=\pi_1\cdots \pi_n \in NC_n$, we mean a sequence ${\bf s}$ of consecutive letters of $\pi$ of the form ${\bf s}=x^{i_1}\alpha_1x^{i_2}\alpha_2\cdots x^{i_r}\alpha_rx^{i_{r+1}}$, where $r,i_1,\ldots,i_{r+1} \geq 1$, each $\alpha_i$ is isomorphic to $\rho$ and $x<\min\{\alpha_1\cup\alpha_2\cup\cdots\cup\alpha_r\}$, that is contained in no other such string of strictly greater length. Identify all maximal $\tau$-strings ${\bf s}$ within $\pi$; note that the various ${\bf s}$ are mutually disjoint, by maximality. Within each string, replace $x^{i_1},x^{i_2},\ldots,x^{i_{r+1}}$ with
$x^{i_{r+1}},x^{i_r},\ldots,x^{i_1}$ (i.e., reverse the order of the $x$-runs), leaving the $\alpha_i$ unchanged.  Let $\pi' \in NC_n$ denote the partition that results from performing this operation on all maximal $\tau$-strings ${\bf s}$; note that $\pi \mapsto \pi'$ is an involution and hence bijective.  Since any occurrence of $\tau$ must lie within some ${\bf s}$, the mapping $\pi \mapsto \pi'$ implies the desired equivalence of distributions and completes the proof.
\end{proof}

Theorem \ref{1arho1b} yields the following formula for the total number of occurrences of $1^a(\rho+1)1^b$.

\begin{corollary}\label{1arho1bcor1}
If $n \geq m+a+b-1$, then the total number of occurrences of $\tau=1^a(\rho+1)1^b$ as described above within all the members of $NC_n$ is given by $\binom{2r}{r+1}$, where $r=n-m-a-b+1$.
\end{corollary}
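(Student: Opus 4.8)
The plan is to prove the identity combinatorially, following the pattern of the proof of Corollary~\ref{1a1b2cor2}: I will set up a bijection $\Phi$ between the marked occurrences of $\tau=1^a(\rho+1)1^b$, taken over all members of $NC_n$, and the pairs $(\sigma,i)$ with $\sigma\in NC_r$ and $i\in[r]$, where $r=n-m-a-b+1$. Since the number of such pairs is $|NC_r|\cdot r=rC_r=\binom{2r}{r-1}=\binom{2r}{r+1}$, this gives the formula. There is also a purely analytic route: differentiate the generating function $F$ of Theorem~\ref{1arho1b} in $q$ and put $q=1$. Writing $s=\min\{a,b\}$, $t=\max\{a,b\}$ and $A=1-x+(1-q)(1-x^s)x^{m+t}$, $B=1-x+(1-q)(1-x^{s-1})x^{m+t}$, one has $A|_{q=1}=B|_{q=1}=1-x$ and $\partial_q(A-B)=-(1-x)x^{m+a+b-1}$; after the $(1-x)$ and $\sqrt{1-4x}$ factors cancel this yields $\partial_qF|_{q=1}=x^{m+a+b}C(x)^2/\sqrt{1-4x}$, and the claim follows on extracting the coefficient of $x^n$, using $xC(x)^2=C(x)-1$, $[x^j](1-4x)^{-1/2}=\binom{2j}{j}$ and Pascal's rule to get $[x^{r-1}]\,C(x)^2/\sqrt{1-4x}=\binom{2r}{r-1}$. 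I prefer the bijection.

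For $\Phi$, let $\pi\in NC_n$ carry a marked occurrence ${\bf s}=x^a\gamma x^b$ of $\tau$, where $\gamma$ is order-isomorphic to $\rho+1$ and $x<\min\gamma$. The key first step is the claim that every block-value occurring in $\gamma$ occurs in $\pi$ \emph{only} within $\gamma$. To see this, note that the block $x$ has its first occurrence at some position $f_x$ lying before all of ${\bf s}$; if a value $z\in\gamma$ also occurred at a position $p$ outside ${\bf s}$, then picking a copy of $x$ inside ${\bf s}$ on the same side of $\gamma$ as $p$ (one exists since $a,b\geq1$) and a copy of $z$ inside $\gamma$ gives four positions bearing the values $x,z,x,z$, in this left-to-right order, with $x<z$, i.e.\ an occurrence of $1$-$2$-$1$-$2$ --- impossible in $NC_n$. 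Hence the $m$ positions occupied by $\gamma$ form a union of complete blocks of $\pi$, whose induced partition is isomorphic to $\rho$. Now delete from $\pi$ all $m$ letters of $\gamma$, the $b$ copies of $x$ that follow $\gamma$, and all but the one copy of $x$ that immediately precedes $\gamma$; relabelling the remaining $r$ letters by $[r]$ gives $\sigma\in NC_r$ (restricting to an induced subpartition preserves the non-crossing property), and we let $i$ be the position in $\sigma$ of the retained copy of $x$. Put $\Phi(\pi,{\bf s})=(\sigma,i)$.

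The inverse $\Psi$ takes $(\sigma,i)$, with $i$ in block $c$ of $\sigma$, and blows up position $i$: insert $a-1$ new elements immediately before position $i$ and adjoin them to block $c$; then insert $m$ new elements immediately after position $i$, forming among themselves a fresh copy of the partition $\rho$ (lying in no old block); then insert $b$ further new elements after these and adjoin them to block $c$. Relabelling by $[n]$ produces a member of $NC_n$: no new crossing is created, since each new block lies entirely inside the inserted run, $\rho$ is non-crossing, and $\sigma$ is non-crossing. The inserted run is an occurrence of $\tau$, as its $m$ middle letters all exceed the (relabelled) value of $c$ --- which plays the role of $x$ --- and reproduce the pattern $\rho+1$. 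One checks directly that $\Psi$ inverts $\Phi$, so $\Phi$ is a bijection and the count follows.

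The step I expect to be the main obstacle is the confinement claim of the second paragraph, that no block-value appearing in $\gamma$ appears elsewhere in $\pi$, together with the accompanying verifications that $\Psi$ always lands in $NC_n$ and that $\Phi$ and $\Psi$ are mutually inverse; once these structural facts are secured the enumeration is immediate. The generating-function argument bypasses all of this but at the price of a longer algebraic reduction, whose one delicate point is the cancellation that collapses the formula of Theorem~\ref{1arho1b} to $x^{m+a+b}C(x)^2/\sqrt{1-4x}$ at $q=1$.
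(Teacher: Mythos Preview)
Your proposal is correct.  The paper gives no explicit proof of this corollary beyond the remark that it follows from Theorem~\ref{1arho1b}; the intended argument is precisely your analytic one (differentiate the closed form in $q$, set $q=1$, and extract coefficients), and your computation $\partial_qF\big|_{q=1}=x^{m+a+b}C(x)^2/\sqrt{1-4x}$ together with the coefficient extraction is right.

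Your bijective argument is a genuinely different route, in the spirit of the paper's combinatorial proof of Corollary~\ref{1a1b2cor2}.  The key structural step --- that the values appearing in $\gamma$ occur nowhere else in $\pi$ --- is exactly what makes the deletion/insertion reversible, and your proof of it is sound: one small inaccuracy is the assertion that the first occurrence $f_x$ of $x$ lies strictly before $\mathbf{s}$ (it may coincide with the first position of $\mathbf{s}$), but this does not affect the argument, since in the only case where you need $f_x$ to precede $p$ (namely $p$ to the left of $\mathbf{s}$), restricted growth forces $f_x<\text{first occurrence of }z\le p$.  What the bijection buys over the analytic proof is an explanation of why the answer coincides with the $1^{m+a+b}$ count from Corollary~\ref{1a1b2cor2} (both reduce to ``choose a position in a member of $NC_r$''); what the analytic proof buys is brevity and freedom from the somewhat delicate verification that $\Psi$ really lands in $NC_n$ and that the relabeling in $\Phi$ and $\Psi$ are exact inverses.
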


\noindent \emph{Remarks:} When $s=1$ in Theorem \ref{1arho1b}, the formula for $F=F_\tau$ may be simplified further to give
$$F=\frac{1+(1-q)x^{a+m}-\sqrt{(1+(1-q)x^{a+m})(1-4x+(1-q)x^{a+m})}}{2x},$$
where $\tau=1^a(\rho+1)1$ or $1(\rho+1)1^a$ and $a \geq 1$. Note that there is really no loss of generality in assuming $\rho$ is a sequential representation of some (non-crossing) partition in the hypotheses for Theorem \ref{1arho1b} above.  This is because if the first occurrence of some letter $c$ in $\rho$ precedes the first occurrence of $d$ with $c>d$, then containment of $\tau=1^a(\rho+1)1^b$ by a partition $\pi$ would imply an occurrence of 1-2-1-2 of the form $y$-$z$-$y$-$z$, where $z$ corresponds to the $d+1$ in $\rho+1$ and $y$ to the $1$ of $\tau$.  In addition to implying the symmetry in $a$ and $b$ of the pattern $\tau$, the formula in Theorem \ref{1arho1b} shows that $\tau=1^a(\rho+1)1^b$ is equivalent to $\tau'=1^a(\rho'+1)1^{b'}$ of  the same length, where $\rho'$ denotes a nonempty non-crossing partition and $a \leq \min\{b,b'\}$. For example, when $|\tau|=4$, we have $1121 \equiv 1211 \equiv 1221 \equiv 1231$ as subwords on $NC_n$.  A bijective proof of $1^a(\rho+1)1^b \equiv 1^a(\rho'+1)1^{b'}$ can be obtained by modifying somewhat the mapping $f$ described above, the details of which we leave to the interested reader.

\section{The subwords $12\cdots(m-1)m^a$ and $1^a23\cdots m$}

Let $\tau=12\cdots(m-1)m^a$, where $a,m \geq2$.  To aid in enumerating the members of $NC_n$ with respect to occurrences of $\tau$, we consider the joint distribution with a further parameter on $NC_n$ that was introduced in \cite{MSh}.  Given $\pi=\pi_1\cdots \pi_n \in NC_n$, excluding the increasing partition $12\cdots n$, let $\text{rep}(\pi)$ denote the smallest repeated letter of $\pi$.  Below, we will find, more generally, the gf for the joint distribution $\sum_{\pi \in NC_n}v^{\text{rep}(\pi)}q^{\mu_\tau(\pi)}$,
where $\text{rep}(12\cdots n)$ is defined to be zero.

Let $NC_{n,i}$ for $1 \leq i \leq n-1$ denote the subset of $NC_n$ whose members have smallest repeated letter $i$.  Define $a(n,i)=\sum_{\pi \in NC_{n,i}}q^{\mu_\tau(\pi)}$ for $n \geq 2$ and $1 \leq i \leq n-1$ and
$a(n)=\sum_{\pi \in NC_n}q^{\mu_\tau(\pi)}$ for $n \geq 1$, with $a(0)=1$.

To aid in finding recurrences for $a(n)$ and $a(n,i)$, we consider a generalization of $\mu_\tau$ as follows.  Given $\ell \geq 0$ and a partition $\pi$, let $\mu^{(\ell)}_\tau(\pi)$ denote the number of occurrences of $\tau$ in the sequence $12\cdots \ell(\pi+\ell)$.  Define
$$a^{(\ell)}(n,i)=\sum_{\pi \in NC_{n,i}}q^{\mu^{(\ell)}_\tau(\pi)}, \qquad n \geq 2 \text{ and } 1 \leq i \leq n-1,$$ and
$$a^{(\ell)}(n)=\sum_{\pi \in NC_n}q^{\mu^{(\ell)}_\tau(\pi)}, \qquad n \geq 1,$$
with $a^{(\ell)}(0)=1$.  Note that $a^{(0)}(n,i)=a(n,i)$ and $a^{(0)}(n)=a(n)$ for all $n$ and $i$.

We have the following system of recurrences satisfied by the $a^{(\ell)}(n,i)$ and $a^{(\ell)}(n)$.

\begin{lemma}\label{12m-1mal1}
If $n \geq a$ and $1 \leq i \leq n-a+1$, then
\begin{equation}\label{12m-1mal1e1}
a^{(\ell)}(n,i)=\sum_{j=i+1}^{n}a^{(\ell+i)}(j-i-1)a^{(0)}(n-j+1)+
\begin{cases}0, \text{ if }i+\ell \leq m-1,\\(q-1)a^{(0)}(n-i-a+2), \text{ if } i+\ell \geq m, \end{cases}
\end{equation}
for all $\ell \geq 0$.  Furthermore, we have
\begin{equation}\label{12m-1mal1e2}
a^{(\ell)}(n)=C_{a-1}+\sum_{i=1}^{n-a+1}a^{(\ell)}(n,i), \qquad n \geq a,
\end{equation}
with $a^{(\ell)}(n)=C_n$ for $0 \leq n \leq a-1$.
\end{lemma}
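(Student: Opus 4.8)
The plan is to establish the ``global'' identity \eqref{12m-1mal1e2} first and then derive the recursion \eqref{12m-1mal1e1} from a structural decomposition of $NC_{n,i}$ together with a careful accounting of how occurrences of $\tau$ distribute over the pieces. For \eqref{12m-1mal1e2}, partition $NC_n$ according to $\text{rep}$: every $\pi$ is either $12\cdots n$ or has $\text{rep}(\pi)=i$ for some $1\le i\le n-1$, in which case restricted growth forces $\pi_1\cdots\pi_i=12\cdots i$. I would first observe that $\mu_\tau^{(\ell)}(\pi)=0$ whenever $n\le a-1$, or $n\ge a$ and $i\ge n-a+2$, or $\pi=12\cdots n$: since $\tau=12\cdots(m-1)m^a$ contains a flat run of $a\ge 2$ equal letters while the prefix $12\cdots(\ell+i)$ of $12\cdots\ell(\pi+\ell)$ is strictly increasing, any flat run lies inside a window of size at most $1+(n-i)\le a-1$, so no occurrence of $\tau$ can fit. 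The partitions $\pi$ with $n\ge a$ satisfying one of these conditions are exactly those whose first $n-a+1$ letters are $12\cdots(n-a+1)$ and whose last $a-1$ letters use only letters $\ge n-a+2$; relabeling the last block gives a bijection with $NC_{a-1}$. Hence, since $\mu_\tau^{(\ell)}$ vanishes on all of them, $a^{(\ell)}(n)=1+\sum_{i=1}^{n-1}a^{(\ell)}(n,i)=C_{a-1}+\sum_{i=1}^{n-a+1}a^{(\ell)}(n,i)$, and the range $0\le n\le a-1$ is handled the same way.

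For \eqref{12m-1mal1e1}, fix $\pi\in NC_{n,i}$ with $1\le i\le n-a+1$, and let $j$ (with $i+1\le j\le n$) be the position of the \emph{second} occurrence of the letter $i$, so that $\pi=12\cdots i\cdot w'\cdot i\cdot w''$ with $|w'|=j-i-1$ and $|w''|=n-j$. Restricted growth forces the distinct letters of $w'$ to form an initial block $\{i+1,\dots,i+k\}$, and $\pi$ being non-crossing forces $w''$ to use only letters in $\{i\}\cup\{i+k+1,i+k+2,\dots\}$ (a letter $>i$ common to $w'$ and $w''$ would, with the two $i$'s, give a 1-2-1-2). Relabeling $w'$ down by $i$ and relabeling $i\cdot w''$ by the unique increasing map onto an initial segment yields, for each fixed $j$, a bijection between $\{\pi\in NC_{n,i}:\text{the second }i\text{ is in position }j\}$ and $NC_{j-i-1}\times NC_{n-j+1}$; the only point needing care is that the reconstructed word is non-crossing, which is a short case analysis reducing any candidate 1-2-1-2 to one already present in a factor.

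The crux is then to locate the occurrences of $\tau$ in $12\cdots\ell(\pi+\ell)$, which I would sort into three disjoint types: (a) those meeting $w'$, (b) those contained in the tail $i\cdot w''$, and (c) at most one \emph{straddling} occurrence using both the increasing prefix and the tail. Since the word $12\cdots(m-1)m^a$ is weakly increasing while $w'$ (letters $>i$) is immediately followed by the letter $i$, a strict descent, no occurrence can cross from $w'$ into that $i$, so when $w'$ is nonempty types (a),(b) are exhaustive; as $12\cdots\ell$ followed by $12\cdots i$ merges into $12\cdots(\ell+i)$, type (a) is counted by $\mu_\tau^{(\ell+i)}$ of the relabeled $w'$ and type (b) by $\mu_\tau^{(0)}$ of the relabeled $i\cdot w''$, which after summing over the factor gives $a^{(\ell+i)}(j-i-1)a^{(0)}(n-j+1)$. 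When $w'$ is empty ($j=i+1$), the last prefix letter and the second $i$ have equal value, and I would show the unique possible straddling occurrence must occupy positions $\ell+i-m+1,\dots,\ell+i+a-1$, so that it exists precisely when $\ell+i\ge m$ and $i\cdot w''$ begins with at least $a-1$ copies of $i$; summing the extra factor $q$ over such tails gives $(q-1)\sum_{\rho}q^{\mu_\tau^{(0)}(\rho)}$ over $\rho\in NC_{n-i}$ beginning with at least $a-1$ ones, and the weight-preserving bijection $1^{a-1}\varsigma\leftrightarrow 1\varsigma$ (weight-preserving because an occurrence of $\tau$ can meet at most one of the leading $1$'s) turns this into $(q-1)a^{(0)}(n-i-a+2)$, which is well-defined since $i\le n-a+1$. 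Collecting the three contributions over all $j$ yields \eqref{12m-1mal1e1}.

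I expect the main obstacle to be the bookkeeping in the third step: proving that occurrences of $\tau$ split cleanly into the three stated types, and in particular that in the $w'=\varnothing$ case there is at most one straddling occurrence, in exactly the stated position, so that the correction collapses to the single additive term $(q-1)a^{(0)}(n-i-a+2)$ rather than to something that fails to simplify. The remaining items---checking that each relabeling used is order-preserving (hence preserves $\mu_\tau$), and that the reconstruction maps are genuine bijections onto the stated products---are routine but need to be spelled out.
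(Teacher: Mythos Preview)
Your proposal is correct and follows essentially the same approach as the paper: both decompose $\pi\in NC_{n,i}$ according to the position $j$ of the second occurrence of $i$, split occurrences of $\tau$ across the prefix--$w'$--tail factorization, and isolate the single possible straddling occurrence in the $j=i+1$ case when $i+\ell\ge m$. The only cosmetic difference is that the paper obtains the correction term $(q-1)a^{(0)}(n-i-a+2)$ via a telescoping sum over the length $r$ of the initial run of $i$'s, whereas you use the weight-preserving bijection $1^{a-1}\varsigma\leftrightarrow 1\varsigma$; these are equivalent, and your treatment of \eqref{12m-1mal1e2} (arguing directly that $\mu_\tau^{(\ell)}$ vanishes when the non-increasing window has length $\le a-1$) matches the paper's as well.
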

\begin{proof}
Since $a^{(\ell)}(0)=1$ for all $\ell \geq 0$, formula \eqref{12m-1mal1e1} is equivalent to
\footnotesize{\begin{align}\label{12m-1mal1e1a}
a^{\ell}(n,i)=\sum_{j=i+2}^{n}a^{(\ell+i)}(j-i-1)a^{(0)}(n-j+1)+
\begin{cases}a^{(0)}(n-i), \text{ if }i+\ell \leq m-1,\\ a^{(0)}(n-i)+(q-1)a^{(0)}(n-i-a+2), \text{ if } i+\ell \geq m, \end{cases}\end{align}}\normalsize
which we will now show. To do so, first consider the position $j$ of the second occurrence of $i$ within $\pi \in NC_{n,i}$.  If $j \geq i+2$, such $\pi$ are expressible as $\pi=12\cdots i\alpha i\beta$, where $\alpha$ is nonempty and contains no letters $i$ and $\beta$ is possibly empty. Then we get $a^{(\ell+i)}(j-i-1)a^{(0)}(n-j+1)$ possibilities and summing over all $j \geq i+2$ yields the first part of \eqref{12m-1mal1e1a} in either case.  So assume $j=i+1$ and first suppose $i+\ell \leq m-1$.  Then there is no occurrence of $\tau$ in $\pi$ involving any of its first $i-1$ letters, regardless of the length of the leftmost run of $i$'s, which implies a contribution of $a^{(0)}(n-i)$ and hence the first case of \eqref{12m-1mal1e1a}.  If $i+\ell \geq m$, then we consider cases based on the length of the leftmost run of $i$'s as follows.  Suppose first that $\pi$ is expressible as $\pi=12\cdots(i-1)i^{r}\pi'$, where $\pi'$ does not start with $i$ and $2 \leq r \leq a-1$, assuming for now $a \geq 3$.  Then, by subtraction, there are $a^{(0)}(n-i-r+2)-a^{(0)}(n-i-r+1)$ possibilities and summing over all $r$ gives
$$\sum_{r=2}^{a-1}(a^{(0)}(n-i-r+2)-a^{(0)}(n-i-r+1))=a^{(0)}(n-i)-a^{(0)}(n-i-a+2).$$
On the other hand, if $\pi=12\cdots(i-1)i^r\pi'$, where $r \geq a$, then $i+\ell \geq m$ implies that there is an occurrence of $\tau$ involving the first $i+a-1$ letters of $\pi$ (when taken together with the understood suffix $12\cdots \ell$ consisting of strictly smaller letters).  Then the sequence $i^{r-a+1}\pi'$ corresponds to a partition enumerated by $a^{(0)}(n-i-a+2)$, as it is directly preceded by at least one $i$, and hence the contribution towards the overall weight in this case is given by $qa^{(0)}(n-i-a+2)$.  Combining this case with the previous yields the second part of \eqref{12m-1mal1e1a} when $i+\ell \geq m$ and completes the proof of \eqref{12m-1mal1e1a}.

For \eqref{12m-1mal1e2}, first note that the initial conditions when $0 \leq n \leq a-1$ are apparent since no occurrence of $\tau$ is possible for such $n$ for all $\ell$.  Suppose $k$ is the smallest repeated letter in $\pi \in NC_n$.  If $1 \leq k \leq n-a+1$, then $\pi$ is accounted for by the sum in \eqref{12m-1mal1e2}, by the definitions.  Otherwise, $\pi$ can be represented as $\pi=12\cdots (n-a+1)\pi'$, where $\pi'$ contains no letters in $[n-a+1]$, for which there are $C_{a-1}$ possibilities since no such $\pi$ can contain an occurrence of $\tau$ (as the $m^a$ part of $\tau$ cannot be achieved by any letter in $\pi'$).  Combining this with the prior case yields \eqref{12m-1mal1e2}.
\end{proof}

Define
$$A(x,u)=\sum_{n \geq0}\sum_{\ell \geq0}a^{(\ell)}(n)u^\ell x^n$$
and
$$A(x,u,v)=\sum_{n \geq a}\sum_{\ell \geq0}\sum_{i=1}^{n-a+1}a^{(\ell)}(n,i)u^\ell v^{i-1} x^n.$$
Rewriting the recurrences in Lemma \ref{12m-1mal1} in terms of gf's yields the following system of functional equations.

\begin{lemma}\label{12m-1mal2}
We have
\begin{equation}\label{12m-1mal2e1}
A(x,u)=A(x,u,1)+\frac{x^aC_{a-1}}{(1-u)(1-x)}+L(x,u),
\end{equation}
\begin{align}
A(x,u,v)&=\frac{x(A(x,0)-1)(A(x,vx)-A(x,u))}{vx-u}-M(x,u,v)\notag\\
&\quad+\frac{(q-1)x^{a-1}(A(x,0)-1)(u^m(1-vx)-(vx)^m(1-u))}{(1-u)(1-vx)(u-vx)},\label{12m-1mal2e2}
\end{align}
where $L(x,u)=\frac{1}{1-u}\sum_{j=0}^{a-1}C_jx^j$ and $$M(x,u,v)=\begin{cases} 0, \text{ if } a=2,\\ \frac{x}{(1-u)(1-vx)}\sum_{j=0}^{a-3}\sum_{i=1}^{a-2-j}C_iC_jx^{i+j}, \text{ if } a \geq 3. \end{cases}$$
\end{lemma}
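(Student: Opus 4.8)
The plan is to translate the two recurrences of Lemma~\ref{12m-1mal1} directly into generating function identities by multiplying each through by $u^\ell v^{i-1}x^n$ (respectively $u^\ell x^n$) and summing over the stated ranges. For \eqref{12m-1mal2e1}, I would multiply \eqref{12m-1mal1e2} by $u^\ell x^n$ and sum over $\ell \geq 0$ and $n \geq a$. The constant term $C_{a-1}$ contributes $C_{a-1}\sum_{n \geq a}\sum_{\ell \geq 0}u^\ell x^n = \frac{x^a C_{a-1}}{(1-u)(1-x)}$; the inner sum $\sum_{i=1}^{n-a+1}a^{(\ell)}(n,i)$ contributes exactly $A(x,u,1)$ by definition; and the terms with $0 \leq n \leq a-1$, for which $a^{(\ell)}(n)=C_n$ independently of $\ell$, contribute $\frac{1}{1-u}\sum_{j=0}^{a-1}C_j x^j = L(x,u)$. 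Adding these gives \eqref{12m-1mal2e1}.

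The substance is \eqref{12m-1mal2e2}. Starting from \eqref{12m-1mal1e1}, I would multiply by $u^\ell v^{i-1}x^n$ and sum over $\ell \geq 0$, $n \geq a$, and $1 \leq i \leq n-a+1$. In the convolution term, substitute $p=j-i-1 \geq 0$ and $s=n-j+1 \geq 1$, so that $n=i+p+s$ and the restriction $i \leq n-a+1$ becomes $p+s \geq a-1$. Writing the resulting triple sum as the unrestricted sum minus the part with $p+s \leq a-2$, the latter only involves $a^{(\ell+i)}(p)$ and $a^{(0)}(s)$ with $p,s \leq a-2$, which equal $C_p$ and $C_s$; summing the geometric factor $\sum_{i\geq1,\ell\geq0}u^\ell v^{i-1}x^i=\frac{x}{(1-u)(1-vx)}$ then identifies this deficit precisely with $M(x,u,v)$ (which is empty, hence $0$, exactly when $a=2$). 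For the unrestricted sum, factor off $\sum_{s\geq1}a^{(0)}(s)x^s=A(x,0)-1$ and, in the remaining factor, collect terms by $k=\ell+i$: the inner sum $x\sum_{i=1}^{k}u^{k-i}(vx)^{i-1}=\frac{x(u^k-(vx)^k)}{u-vx}$ telescopes against $\sum_{k\geq1}\left(\sum_p a^{(k)}(p)x^p\right)u^k=A(x,u)-A(x,0)$, yielding $\frac{x(A(x,u)-A(x,vx))}{u-vx}=\frac{x(A(x,vx)-A(x,u))}{vx-u}$. Finally, for the $(q-1)$ correction term, which is present only when $i+\ell \geq m$, substitute $t=n-i-a+2 \geq 1$ to factor off $(q-1)x^{a-1}(A(x,0)-1)$, and evaluate the restricted geometric sum $\sum_{i\geq1,\,\ell\geq0,\,i+\ell\geq m}u^\ell(vx)^{i-1}$ in closed form by splitting at $i=m$, which gives $\frac{u^m(1-vx)-(vx)^m(1-u)}{(1-u)(1-vx)(u-vx)}$. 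Combining the three contributions yields \eqref{12m-1mal2e2}.

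The main obstacle is the bookkeeping around the boundary condition $1 \leq i \leq n-a+1$: after reindexing it becomes the constraint $p+s \geq a-1$, and one must check that dropping it introduces exactly the correction $M(x,u,v)$ — this relies on the fact that $a^{(\ell)}(n)=C_n$ for $n \leq a-1$ regardless of $\ell$, so that the $u$-dependence factors out of the deficit cleanly. The only other delicate point is the closed-form evaluation of the two partial geometric sums (grouped by $k=\ell+i$ for the convolution term, and restricted to $\{i+\ell \geq m\}$ for the correction term); both are elementary but must be simplified carefully to land on the stated rational expressions, in particular recognizing the numerator $u^m(1-vx)-(vx)^m(1-u)$.
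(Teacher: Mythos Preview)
Your proposal is correct and follows essentially the same route as the paper: both derive \eqref{12m-1mal2e1} by summing \eqref{12m-1mal1e2} against $u^\ell x^n$, and both derive \eqref{12m-1mal2e2} by reindexing the convolution in \eqref{12m-1mal1e1}, writing it as an unrestricted double sum minus a boundary correction that becomes $M(x,u,v)$ (using $a^{(\ell)}(n)=C_n$ for $n\le a-1$), then grouping by $\ell+i$ to obtain $\frac{x(A(x,vx)-A(x,u))}{vx-u}$ and splitting the $(q-1)$ geometric sum at $i=m$. The only cosmetic difference is that the paper splits the outer sum on $j$ before shifting indices, whereas you pass directly to the coordinates $(p,s)=(j-i-1,\,n-j+1)$ and isolate the single constraint $p+s\ge a-1$; the computations are otherwise identical.
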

\begin{proof}
Multiplying both sides of \eqref{12m-1mal1e2} by $u^\ell x^n$, and summing over $n \geq a$ and $\ell \geq 0$, gives
\begin{align*}
A(x,u)&=\sum_{n\geq a}\sum_{\ell \geq0}\sum_{i=1}^{n-a+1}a^{(\ell)}(n,i)u^\ell x^n+\sum_{n\geq a}\sum_{\ell \geq0}C_{a-1}u^\ell x^n+\sum_{n=0}^{a-1}\sum_{\ell \geq0}a^{(\ell)}(n)u^\ell x^n\\
&=A(x,u,1)+\frac{x^aC_{a-1}}{(1-u)(1-x)}+\frac{1}{1-u}\sum_{j=0}^{a-1}C_jx^j,
\end{align*}
by the initial values for $a^{(\ell)}(n)$.

To rewrite \eqref{12m-1mal1e1} in terms of gf's, we first must find
$$\sum_{n \geq a}\sum_{\ell \geq 0}\sum_{i=1}^{n-a+1}\sum_{j=i+1}^na^{(\ell+i)}(j-i-1)a^{(0)}(n-j+1)u^\ell v^{i-1}x^n.$$
First observe the following manipulation of sums:
\begin{align*}
&\sum_{n \geq a}\sum_{\ell \geq 0}\sum_{i=1}^{n-a+1}\sum_{j=i+1}^n(
\ldots)=\sum_{\ell \geq0}\sum_{i \geq 1}\sum_{j \geq i+1}\sum_{n \geq \max\{j,i+a-1\}}(\ldots)\\
&=\sum_{\ell \geq0}\sum_{i \geq 1}\sum_{j=i+1}^{i+a-1}\sum_{n \geq i+a-1}(\ldots) + \sum_{\ell \geq0}\sum_{i \geq 1}\sum_{j \geq i+a}\sum_{n \geq j}(\ldots),
\end{align*}
where $(\ldots)$ denotes the original summand above.  Replacing $j$ with $j+i+1$ in both sums in the last expression implies
\begin{align*}
&\sum_{n \geq a}\sum_{\ell \geq 0}\sum_{i=1}^{n-a+1}\sum_{j=i+1}^na^{(\ell+i)}(j-i-1)a^{(0)}(n-j+1)u^\ell v^{i-1}x^n\\
&=\sum_{\ell \geq0}\sum_{i \geq 1}\sum_{j=0}^{a-2}\sum_{n \geq i+a-1}a^{(\ell+i)}(j)a^{(0)}(n-j-i)u^\ell v^{i-1}x^n\\
&\quad+\sum_{\ell \geq0}\sum_{i \geq 1}\sum_{j\geq a-1}\sum_{n \geq j+i+1}a^{(\ell+i)}(j)a^{(0)}(n-j-i)u^\ell v^{i-1}x^n\\
&=\sum_{\ell \geq0}\sum_{i \geq 1}\sum_{j=0}^{a-2}\sum_{n \geq a-1-j}a^{(\ell+i)}(j)a^{(0)}(n)u^\ell v^{i-1}x^{n+i+j}\\
&\quad+\sum_{\ell \geq0}\sum_{i \geq 1}\sum_{j\geq a-1}\sum_{n\geq1}a^{(\ell+i)}(j)a^{(0)}(n)u^\ell v^{i-1}x^{n+i+j}\\
&=\sum_{\ell \geq0}\sum_{i \geq 1}\sum_{j=0}^{a-2}a^{(\ell+i)}(j)u^\ell v^{i-1}x^{i+j}\left(\sum_{n\geq1}a^{(0)}(n)x^n-\sum_{n=1}^{a-2-j}a^{(0)}(n)x^n\right)\\
&\quad+\sum_{\ell \geq0}\sum_{i \geq 1}\sum_{j\geq a-1}a^{(\ell+i)}(j)u^\ell v^{i-1}x^{i+j}\sum_{n\geq1}a^{(0)}(n)x^n\\
&=\sum_{\ell \geq0}\sum_{i \geq 1}\sum_{j\geq 0}a^{(\ell+i)}(j)u^\ell v^{i-1}x^{i+j}\sum_{n \geq 1}a^{(0)}(n)x^n\\
&\quad-\sum_{\ell \geq0}\sum_{i \geq 1}\sum_{j=0}^{a-2}a^{(\ell+i)}(j)u^\ell v^{i-1}x^{i+j}\sum_{n=1}^{a-2-j}a^{(0)}(n)x^n\\
&=(A(x,0)-1)\sum_{i\geq1}\sum_{\ell \geq i}\sum_{j \geq0}a^{(\ell)}(j)u^{\ell-i}v^{i-1}x^{i+j}-\frac{x}{(1-u)(1-vx)}\sum_{j=0}^{a-3}C_jx^j\sum_{n=1}^{a-2-j}C_nx^n\\
&=\frac{x(A(x,0)-1)}{u-vx}\sum_{\ell \geq 1}\sum_{j \geq 0}a^{(\ell)}(j)x^j(u^\ell-(vx)^\ell)-M(x,u,v)\\
&=\frac{x(A(x,0)-1)(A(x,vx)-A(x,u))}{vx-u}-M(x,u,v).
\end{align*}

For converting the second part of formula \eqref{12m-1mal1e1}, we consider cases on $i$.  Omitting the factor $q-1$, this yields
\begin{align*}
&\sum_{i=1}^{m-1}\sum_{\ell \geq m-i}\sum_{n \geq i+a-1}a^{(0)}(n-i-a+2)u^\ell v^{i-1}x^n+\sum_{i \geq m}\sum_{\ell \geq0}\sum_{n \geq i+a-1}a^{(0)}(n-i-a+2)u^\ell v^{i-1}x^n\\
&=(A(x,0)-1)\sum_{i=1}^{m-1}\sum_{\ell \geq m-i}u^\ell v^{i-1}x^{i+a-2}+(A(x,0)-1)\sum_{i\geq m}\sum_{\ell \geq 0}u^\ell v^{i-1}x^{i+a-2}\\
&=(A(x,0)-1)\left(\frac{ux^{a-1}(u^{m-1}-(vx)^{m-1})}{(1-u)(u-vx)}+\frac{v^{m-1}x^{a+m-2}}{(1-u)(1-vx)}\right)\\
&=\frac{x^{a-1}(A(x,0)-1)(u^m(1-vx)-(vx)^m(1-u))}{(1-u)(1-vx)(u-vx)}.
\end{align*}
Combining the two contributions to the gf above yields \eqref{12m-1mal2e2}.
\end{proof}

\begin{theorem}\label{t1m-1mal2}
Let $y=A(x,0)$ denote the generating function counting members of $NC_n$ for $n \geq 0$ according to the number of occurrences of $\tau=12\cdots(m-1)m^a$, where $a,m \geq 2$.  Then $y$ satisfies the polynomial equation
\begin{equation}\label{t1m-1mal2e1}
xy^2-y+1+(q-1)x^{a+m-2}y^{m-1}(y-1)=0.
\end{equation}
More generally, the generating function counting members of $NC_n$ jointly according to the smallest repeated letter and number of occurrences of $\tau$ (marked by $v$ and $q$, respectively) is given by $vA(x,0,v)+\frac{1}{1-x}$ if $a=2$ and by
$$vA(x,0,v)+\frac{1}{1-x}+\frac{v^2x^aC_{a-1}-v^2x^2}{1-vx}+v\sum_{i=2}^{a-1}C_ix^i+(v-1)\sum_{n\geq 2}\sum_{j=r}^{n-1}C_{n-j}x^j,$$
if $a \geq 3$, where $r=\max\{1,n-a+2\}$ and
\begin{equation}\label{t1m-1mal2e2}
A(x,0,v)=\frac{(y-1)(A(x,vx)-y)}{v}-M(x,0,v)+\frac{(q-1)v^{m-1}x^{a+m-2}(y-1)}{1-vx},
\end{equation}
with  $A(x,u)$ given by \eqref{t1m-1mal2e3a}.
\end{theorem}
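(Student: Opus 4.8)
The plan is to apply the kernel method to the functional equation \eqref{12m-1mal2e2} for $A(x,u,v)$, exactly as the section heading advertises. Setting $v=0$ in \eqref{12m-1mal2e2} (together with \eqref{12m-1mal2e1}) already gives the relation \eqref{t1m-1mal2e2} once one checks that the $M$-term and the $(q-1)$-term specialize correctly; the genuine work is to extract a closed equation for $y=A(x,0)$ alone. First I would substitute $u=vx$ into \eqref{12m-1mal2e2}: the factor $(vx-u)$ in the denominator of the main term vanishes, and likewise $(u-vx)$ vanishes in the $(q-1)$-term, so requiring the left-hand side $A(x,vx,v)$ to stay finite forces the numerators to vanish simultaneously. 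This is the kernel cancellation. Writing $u=vx$ and using l'Hôpital (or a Taylor expansion in $u-vx$) on the right-hand side turns \eqref{12m-1mal2e2} into an identity relating $A(x,vx,v)$, the derivative $\partial_u A(x,u)|_{u=vx}$ (which one rewrites via \eqref{12m-1mal2e1} in terms of $\partial_u A(x,u,1)$), and lower-order explicit terms $L$, $M$. Combined with \eqref{12m-1mal2e1} evaluated at $u=vx$, this yields a first functional relation among $A(x,vx,v)$ and $A(x,vx)$.

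The decisive step is the second substitution. The only way to produce a \emph{polynomial} equation in $y$ is to choose $v$ so that $vx$ itself becomes a distinguished value. The natural choice is $v=1$: then $A(x,vx)=A(x,x)$, and one needs a second kernel-type relation to pin down $A(x,x)$ in terms of $y=A(x,0)$. Here I expect the key observation to be a direct combinatorial identity $A(x,x)=xy^2$ or something of that shape — indeed $a^{(\ell)}(n)$ with the weight $u^\ell$ evaluated at $u=x$ re-interprets the parameter $\ell$ (the length of the prepended increasing prefix $12\cdots\ell$) as $\ell$ extra letters, so that $\sum_{\ell,n} a^{(\ell)}(n) x^{\ell+n}$ counts non-crossing partitions with a marked initial increasing run, and such objects decompose as (initial run)(first nontrivial block structure)(rest), giving a quadratic expression in $y$. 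I would make this precise by a symbolic/combinatorial decomposition rather than by manipulating the functional equations, since it is cleaner. Feeding $A(x,x)=xy^2$ (or the correct variant) back into the kernel relation from the first paragraph, specialized at $v=1$, collapses everything to \eqref{t1m-1mal2e1}: the term $(q-1)x^{a+m-2}y^{m-1}(y-1)$ is precisely the image of the $(q-1)$-term in \eqref{12m-1mal2e2} after the substitution, while $xy^2-y+1$ is the usual Catalan kernel contribution.

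The main obstacle will be bookkeeping the auxiliary explicit terms $L(x,u)$, $M(x,u,v)$ and the $v$-power prefactors through the two substitutions without sign or index errors — these terms are inert combinatorially (they encode the small cases $n\le a-1$ and the truncated inner sums) but they must cancel telescopically against the $C_{a-1}x^a/((1-u)(1-x))$ correction and the finite sums $\sum C_iC_jx^{i+j}$ for the final equation to come out as clean as \eqref{t1m-1mal2e1}. Once $y$ is determined, the stated expressions for the joint $(v,q)$-generating function follow by assembling $vA(x,0,v)+\tfrac1{1-x}+\cdots$ from \eqref{12m-1mal2e1}–\eqref{12m-1mal2e2} with $u=0$ and reinstating the bijectively-removed families (the increasing partitions, contributing $\tfrac1{1-x}$, and the partitions whose smallest repeated letter is too large to support an occurrence of $\tau$, contributing the $C_{a-1}$ and Catalan correction sums) — this is routine once \eqref{t1m-1mal2e2} and \eqref{t1m-1mal2e1} are in hand, and I would present it as a short verification. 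I would also need the formula \eqref{t1m-1mal2e3a} for $A(x,u)$ referenced at the end; presumably this is obtained along the way by solving the linear-in-$A(x,u)$ form of \eqref{12m-1mal2e2} after the kernel step, and I would state it explicitly there.
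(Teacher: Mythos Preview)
Your proposal misidentifies the kernel and rests on a false combinatorial identity; as written it would not go through.

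First, substituting $u=vx$ in \eqref{12m-1mal2e2} is not the kernel step. At $u=vx$ both the main term and the $(q-1)$-term have \emph{removable} singularities (the numerators vanish automatically, as you can check directly), so nothing is ``forced''; you simply obtain an equation featuring the new unknown $\partial_u A(x,u)\big|_{u=vx}$ in addition to $A(x,vx,v)$. That is more unknowns, not fewer. The actual kernel appears only after you specialize $v=1$ and use \eqref{12m-1mal2e1} to replace $A(x,u,1)$ by $A(x,u)$ minus explicit terms: the resulting equation is linear in the single unknown $A(x,u)$ with coefficient $\dfrac{xy-u}{x-u}$, and the correct substitution is $u=xy$, not $u=vx$. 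This kills $A(x,u)$ and yields an explicit expression for $A(x,x)$ in terms of $y$ alone (formula \eqref{t1m-1mal2e2a} in the paper).

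Second, the conjectured identity $A(x,x)=xy^2$ is false. At $q=1$ one has $a^{(\ell)}(n)=C_n$ for every $\ell$, whence $A(x,u)=\dfrac{C(x)}{1-u}$ and $A(x,x)=\dfrac{C(x)}{1-x}$, whereas $xy^2=xC(x)^2=C(x)-1$; these disagree. The combinatorial picture you sketch is not right either: $12\cdots\ell(\pi+\ell)$ forces the first $\ell$ letters to be singletons, so $\sum_{\ell,n}a^{(\ell)}(n)x^{n+\ell}$ does not enumerate arbitrary non-crossing partitions with a marked initial increasing run. In the paper, $A(x,x)$ is not obtained combinatorially at all; it drops out of the kernel substitution $u=xy$, and then the Catalan convolution identity $\sum_{j=0}^{i-1}C_{i-j}C_j=C_{i+1}-C_i$ is used to collapse the $L$, $M$ and $C_{a-1}$ corrections telescopically into \eqref{t1m-1mal2e1}. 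Your instinct that this bookkeeping is the main labor is correct, but the input to it is \eqref{t1m-1mal2e2a}, not $xy^2$.

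Two minor points: \eqref{t1m-1mal2e2} comes from setting $u=0$ (not $v=0$) in \eqref{12m-1mal2e2}; and \eqref{t1m-1mal2e3a} is obtained by solving that same linear-in-$A(x,u)$ equation for $A(x,u)$ before specializing $u$, exactly as you guessed.
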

\begin{proof}
We first find an equation satisfied by $y$.  Note that \eqref{12m-1mal2e2} at $u=0$ and $v=1$, taken together with \eqref{12m-1mal2e1}, gives
\begin{equation}\label{t1m-1mal2e1a}
y^2=(y-1)A(x,x)-M(x,0,1)+\frac{(q-1)x^{a+m-2}(y-1)}{1-x}+\frac{x^aC_{a-1}}{1-x}+L(x,0).
\end{equation}
We apply the kernel method to \eqref{12m-1mal2e2} to obtain an expression for $A(x,x)$.  Taking $u=xA(x,0)=xy$ and $v=1$ in \eqref{12m-1mal2e2} implies
\begin{equation}\label{t1m-1mal2e2a}
A(x,x)=\frac{(q-1)x^{a-2}((xy)^m(1-x)-x^m(1-xy))}{(1-x)(1-xy)}-M(x,xy,1)+\frac{x^aC_{a-1}}{(1-x)(1-xy)}+L(x,xy).
\end{equation}
Now observe
$$\sum_{j=0}^{a-3}\sum_{i=1}^{a-2-j}C_iC_jx^{i+j}=\sum_{i=1}^{a-2}x^i\sum_{j=0}^{i-1}C_{i-j}C_j=\sum_{i=0}^{a-2}x^i(C_{i+1}-C_i),$$
by the recurrence for Catalan numbers.  Thus, the right-hand side of \eqref{t1m-1mal2e1a} may be written as
\begin{align*}
&\frac{(q-1)x^{a-2}((xy)^m(1-x)-x^m(1-xy))(y-1)}{(1-x)(1-xy)}-\frac{x(y-1)}{(1-x)(1-xy)}\sum_{i=0}^{a-2}x^i(C_{i+1}-C_i)\\
&\quad+(y-1)\left(\frac{x^aC_{a-1}}{(1-x)(1-xy)}+\frac{1}{1-xy}\sum_{j=0}^{a-1}C_jx^j\right)-\frac{x}{1-x}\sum_{i=0}^{a-2}x^i(C_{i+1}-C_i)\\
&\quad+\frac{(q-1)x^{a+m-2}(y-1)}{1-x}+\frac{x^aC_{a-1}}{1-x}+\sum_{j=0}^{a-1}C_jx^j\\
&=\frac{(q-1)x^{a+m-2}y^m(y-1)}{1-xy}-\frac{xy}{1-xy}\sum_{i=0}^{a-2}x^i(C_{i+1}-C_i)+\frac{x^ayC_{a-1}}{1-xy}+\frac{y(1-x)}{1-xy}\sum_{j=0}^{a-1}C_jx^j\\
&=\frac{y}{1-xy}\biggl((q-1)x^{a+m-2}y^{m-1}(y-1)-\sum_{i=1}^{a-1}C_ix^i+x\sum_{i=0}^{a-2}C_ix^i+x^aC_{a-1}\\
&\quad+(1-x)\sum_{j=0}^{a-1}C_jx^j\biggr)\\
&=\frac{(q-1)x^{a+m-2}y^m(y-1)+y}{1-xy}.
\end{align*}

Equating this last expression with $y^2$ then leads to \eqref{t1m-1mal2e1}.  Solving for $A(x,u)$ in \eqref{12m-1mal2e2} at $v=1$, making use of \eqref{12m-1mal2e1}, gives
\begin{align}
A(x,u)&=\frac{x-u}{xy-u}\biggr(\frac{x^aC_{a-1}}{(1-u)(1-x)}+\frac{(1-q)x^{a-1}(u^m(1-x)-x^m(1-u))(y-1)}{(1-u)(1-x)(x-u)}\notag\\
&\quad+\frac{x(y-1)}{x-u}A(x,x)-M(x,u,1)+L(x,u)\biggl),\label{t1m-1mal2e3a}
\end{align}
where $A(x,x)$ is given by \eqref{t1m-1mal2e2a}.  Letting $u=0$ in \eqref{12m-1mal2e2} now leads to \eqref{t1m-1mal2e2}.  Finally, taking into account the $v$-weights of members of $NC_n$ having smallest repeated letter $i$ where $r \leq i \leq n-1$, along with the increasing partition (which has weight $1$ for all $n \geq 0$), implies the gf enumerating members of $NC_n$ for $n \geq 0$ jointly according to the rep value and number of occurrences of $\tau$ is given by $vA(x,0,v)+\frac{1}{1-x}$ if $a=2$ and by
$$vA(x,0,v)+\frac{1}{1-x}+\sum_{n=2}^{a-1}x^n\sum_{j=1}^nv^j(C_{n-j+1}-C_{n-j})+\sum_{n\geq a}x^n\sum_{k=n-a+2}^nv^k(C_{n-k+1}-C_{n-k}),$$
if $a \geq 3$.  Rewriting the last expression somewhat yields the stated formula for the joint gf and completes the proof.
\end{proof}

\begin{corollary}\label{t1m-1mal2c1}
If $n \geq a+m-1$, then the total number of occurrences of $\tau=1\cdots(m-1)m^a$ within all the members of $NC_n$ is given by $\frac{r}{2r+m}\binom{2r+m}{r}$, where $r=n-a-m+2$.
\end{corollary}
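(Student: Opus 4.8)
The plan is to read the total occurrence count straight off the polynomial equation \eqref{t1m-1mal2e1} for $y=A(x,0)$. Since $a(n)=\sum_{\pi\in NC_n}q^{\mu_\tau(\pi)}$ and $y=\sum_{n\ge0}a(n)x^n$, the function $T=T(x):=\left.\partial y/\partial q\right|_{q=1}$ has the property that $[x^n]T$ is exactly the total number of occurrences of $\tau$ over all of $NC_n$. So it suffices to find a closed form for $T$ and extract its coefficients.

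First I would set $q=1$ in \eqref{t1m-1mal2e1}, which collapses to $xy^2-y+1=0$, whence $y=C(x)$; I would record the two Catalan identities $1-2xC(x)=\sqrt{1-4x}$ and $C(x)-1=xC(x)^2$ (the latter being $xy^2=y-1$ once more). Differentiating \eqref{t1m-1mal2e1} with respect to $q$ and then specializing to $q=1$ annihilates the term carrying the explicit factor $q-1$ and leaves
$$(2xy-1)\,\frac{\partial y}{\partial q}+x^{a+m-2}\,y^{m-1}(y-1)=0.$$
Solving for $\partial_q y$, substituting $y=C(x)$, and using the two identities above to replace $1-2xy$ by $\sqrt{1-4x}$ and $y^{m-1}(y-1)$ by $x\,C(x)^{m+1}$, one gets the compact form
$$T(x)=\frac{x^{a+m-1}\,C(x)^{m+1}}{\sqrt{1-4x}}.$$

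It then remains to extract $[x^n]T$. With $r=n-a-m+2$ this equals $[x^{r-1}]\,\dfrac{C(x)^{m+1}}{\sqrt{1-4x}}$, so I would invoke the standard identity $\dfrac{C(x)^{j}}{\sqrt{1-4x}}=\sum_{k\ge0}\binom{2k+j}{k}x^{k}$ (valid for all $j\ge0$, itself a one-line consequence of Lagrange inversion for $C(x)$) with $j=m+1$ and $k=r-1$. This yields $[x^n]T=\binom{2r+m-1}{r-1}$, and a short factorial manipulation rewrites $\binom{2r+m-1}{r-1}=\dfrac{(2r+m-1)!}{(r-1)!\,(r+m)!}=\dfrac{r}{2r+m}\binom{2r+m}{r}$, the claimed formula; the hypothesis $n\ge a+m-1$ is precisely what makes $r\ge1$, so the extraction is meaningful (for $r=1$ one gets $[x^0]T=\binom{m+1}{0}=1=\frac{1}{m+2}\binom{m+2}{1}$, a useful sanity check).

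The computation is routine once the correct shape of $T(x)$ is in hand; the only place that calls for a little care is the simplification of $\partial_q y$ at $q=1$, where one must remember that $y$ itself depends on $q$ but that the $q$-derivative of the $(q-1)$-term vanishes at $q=1$, and that the relation $y-1=xy^2$ is exactly what converts $y^{m-1}(y-1)$ into the clean power $x\,C(x)^{m+1}$ and thereby makes the coefficient extraction tractable.
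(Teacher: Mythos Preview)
Your proof is correct and follows essentially the same route as the paper's: differentiate \eqref{t1m-1mal2e1} in $q$, set $q=1$ to get $(2xC-1)D+x^{a+m-2}C^{m-1}(C-1)=0$, solve for $D$, and extract coefficients using the standard identity for $C(x)^j/\sqrt{1-4x}$. The only cosmetic difference is that the paper leaves the numerator as $C^{m-1}(C-1)=C^m-C^{m-1}$ and extracts the two terms separately to obtain $\binom{2r+m}{r}-\binom{2r+m-1}{r}$, whereas you first rewrite it as $xC^{m+1}$ and land directly on $\binom{2r+m-1}{r-1}$; these are equal by Pascal's rule.
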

\begin{proof}
Let $C=C(x)$, $F=A(x,0)$ and $D=\frac{\partial F}{\partial q}\mid_{q=1}$. Differentiating both sides of \eqref{t1m-1mal2e1} with respect to $q$, and noting $F\mid_{q=1}=C$, yields
$$2xCD-D+x^{a+m-2}C^{m-1}(C-1)=0,$$
i.e.,
$$D=\frac{x^{a+m-2}C^{m-1}(C-1)}{1-2xC}=\frac{x^{a+m-2}C^{m-1}(C-1)}{\sqrt{1-4x}}.$$
Extracting the coefficient of $x^n$ for $n \geq a+m-1$, and making use of \cite[Eqn.~2.5.15]{Wilf}, then gives
$$[x^n]D=\binom{2r+m}{r}-\binom{2r+m-1}{r}=\left(1-\frac{r+m}{2r+m}\right)\binom{2r+m}{r}=\frac{r}{2r+m}\binom{2r+m}{r}.$$
\end{proof}

\emph{Remark:}  The $m=2$ and $m=3$ cases of the formula $\frac{r}{2r+m}\binom{2r+m}{r}$ from Corollary \ref{t1m-1mal2c1} coincide respectively with sequences A002054 and A002694 in \cite{Sloane}. \smallskip

We conclude with the following equivalence between $\tau$ and $1^a23\cdots m$.

\begin{theorem}\label{1a2m12maequiv}
We have $1^a23\cdots m \equiv 12\cdots(m-1)m^a$ as subwords on $NC_n$ for all $a,m \geq 2$.
\end{theorem}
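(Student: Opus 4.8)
The plan is to prove the equivalence bijectively, by constructing an involution $h$ on $NC_n$ with $\mu_\tau(\pi)=\mu_{\tau'}(h(\pi))$ for every $\pi\in NC_n$, where $\tau=1^a23\cdots m$ and $\tau'=12\cdots(m-1)m^a$; summing $q^{\mu_\tau}$ over $NC_n$ and re-indexing by $h$ then gives $\sum_{\pi}q^{\mu_\tau(\pi)}=\sum_{\pi}q^{\mu_{\tau'}(\pi)}$. The naive ``reverse the sequence'' map will not work here: reflecting the underlying set partition of $\pi$ sends occurrences of $\tau$ to occurrences of the pattern $m(m-1)\cdots 21^a$, which for $m=2$ is $21^a$ --- inequivalent to $\tau'=12^a$ by Table~\ref{tab1} --- so the map must genuinely carry the repeated block from the front of $\tau$ to the back of $\tau'$.

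First I would pin down the local shape of occurrences. Since $23\cdots m$ and $12\cdots(m-1)$ are sequential forms of non-crossing partitions, the arguments already used in the proofs of Theorems~\ref{thtau1b} and~\ref{1arho1b} show that in any non-crossing $\pi$ an occurrence of $\tau$ is necessarily a string $x^{a}\,c\,(c{+}1)\cdots(c{+}m{-}2)$, in which the letters $c,c{+}1,\dots,c{+}m{-}2$ make their first appearance in $\pi$ within the string and $x<c$ is otherwise arbitrary, while an occurrence of $\tau'$ is a string $z\,c\,(c{+}1)\cdots(c{+}m{-}3)\,(c{+}m{-}2)^{a}$ with the same property of $c,\dots,c{+}m{-}2$ and with $z<c$. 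Thus at the atomic level the required transformation is the shift of multiplicity
$$x^{a}\,c\,(c{+}1)\cdots(c{+}m{-}2)\ \longleftrightarrow\ x\,c\,(c{+}1)\cdots(c{+}m{-}3)\,(c{+}m{-}2)^{a};$$
the two strings occupy the same block of positions, use the same set of letters, both keep $\pi$ a restricted-growth sequence, and differ only in that the multiplicity $a$ rests on the smallest letter $x$ of the occurrence in the first case and on the largest letter $c+m-2$ in the second.

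To handle occurrences of $\tau$ (or of $\tau'$) that chain through a shared boundary letter --- and to avoid the spurious occurrences that a careless atomic swap can create at a boundary, e.g.\ $11232\mapsto12232$, which turns $(\mu_{112},\mu_{122})=(1,0)$ into $(1,1)$ rather than $(0,1)$ --- I would perform the atomic moves only inside suitably defined \emph{maximal $\tau$/$\tau'$-strings}: maximal substrings of $\pi$ that contain, together with the necessary look-ahead context, every occurrence of $\tau$ and of $\tau'$ that meets them. (For $m=2$ such a string is a maximal run of consecutive runs with strictly increasing run-values, and the surgery simply reverses the sequence of run-lengths, in the spirit of the map $g$ above; so for $|\tau|=3$ one recovers $112\equiv122$ from Table~\ref{tab1}.) Such strings are pairwise disjoint, so it is enough to define on one string a self-inverse surgery converting each of its $\tau$-occurrences into a $\tau'$-occurrence and conversely, processing the occurrences from left to right and rippling the effect forward as in the construction of the map $f$. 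Since the atomic swap fixes the positions of the two smallest letters of an occurrence, the endpoints of each string --- and everything outside --- remain untouched, which makes $h$ an involution and yields $\mu_\tau(\pi)=\mu_{\tau'}(h(\pi))$.

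The main obstacle is the verification, which has two delicate parts. First, one must check that the surgery keeps $\pi$ inside $NC_n$: the atomic move slides the first appearances of $c,\dots,c{+}m{-}3$ leftward and enlarges the block of $c+m-2$ rightward within a string, so one has to argue that the nesting of the modified string with the (untouched) remainder of $\pi$ is unchanged. Second --- the subtle point, as the $11232$ example already shows --- one must verify that the occurrence count is preserved \emph{exactly}: the notion of maximal string must be chosen so that no occurrence of $\tau$ or $\tau'$ is gained or lost at a junction inside a string or at either end, and this is where the hypothesis $m\ge2$ and the precise definition of maximality are used. Granting these checks, $h$ is the required involution. Alternatively, one could bypass the bijection and prove directly --- by a first-return decomposition of $NC_n$ that tracks the initial run of $1$'s and the leading increasing block, in the manner of the proof of Theorem~\ref{th1a1b2} --- that $F_{1^a23\cdots m}$ satisfies the same polynomial equation~\eqref{t1m-1mal2e1} as $A(x,0)$; the bijective route, though, fits the paper's style and gives term-by-term equidistribution.
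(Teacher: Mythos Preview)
Your proposal is a plan rather than a proof: you correctly identify the atomic move and the two obstacles (preserving non-crossingness, and preserving the exact occurrence count at string boundaries), but you explicitly defer both verifications and never define the ``maximal $\tau$/$\tau'$-strings'' on which the surgery is to act. As your own $11232\to12232$ example shows, that definition is the whole difficulty; without it the argument does not go through, and for general $m$ it is not clear your left-to-right rippling procedure can be made self-inverse.

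The paper's proof sidesteps all of this with a much simpler decomposition. Rather than $\tau$-specific strings, it cuts $\pi$ at its descents into maximal weakly increasing sections $\rho_1,\ldots,\rho_{r+1}$. Since neither $\tau$ nor $\tau'$ contains a descent, every occurrence of either pattern lies entirely inside one $\rho_i$, so there are no boundary issues at all. Each $\rho_i$ is then encoded by its first letter together with a binary word $\mathbf{d}^{(i)}$ over $\{0,1\}$ recording ``level'' versus ``strict ascent'' at each step; an occurrence of $\tau$ in $\rho_i$ is exactly a factor $0^{a-1}1^{m-1}$ of $\mathbf{d}^{(i)}$, and an occurrence of $\tau'$ is exactly a factor $1^{m-1}0^{a-1}$. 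The involution simply reverses each $\mathbf{d}^{(i)}$. This is trivially self-inverse, preserves the multiset of letters (hence the descent bottoms and the non-crossing property), and swaps the two factor counts on the nose. Your two ``delicate parts'' never arise because the sections are canonical, disjoint, and the surgery within each is just word reversal.
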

\begin{proof}
We provide a bijective proof of this result.  Suppose that the descents from left to right within $\pi=\pi_1\cdots \pi_n \in NC_n$ correspond to the letters $a_i>b_i$ for $1 \leq i \leq r$ and some $r\geq 0$.  Let $\rho_1$ denote the section of $\pi$ to the left of and including $a_1$ and $\rho_{r+1}$ the section to the right of and including $b_r$ (if $r=0$, then $\rho_1$ comprises all of $\pi$).  If $r \geq 2$, then let $\rho_i$ for $2 \leq i \leq r$ denote the subsequence of $\pi$ starting with $b_{i-1}$ and ending with $a_i$.  Note that $\rho_i$ for each $i$ is weakly increasing, as it consists of the letters between consecutive descents of $\pi$ (or occurring prior to the first or after the last descent of $\pi$).

Suppose that the descent bottom letters $b_1, \ldots ,b_r$ within $\pi$ are given, with $b_0=1$.  Let section $\rho_i$ of $\pi$ for $1 \leq i \leq r+1$ be represented sequentially as $\rho_i=s_0^{(i)}s_1^{(i)}\cdots s_{t_{i}}^{(i)}$, where $s_0^{(i)}=b_{i-1}$.  Define the binary sequence ${\bf d^{(i)}}=d_1^{(i)}d_2^{(i)}\cdots d_{t_i}^{(i)}$, where $d_k^{(i)}=1$ if $s_k^{(i)}>s_{k-1}^{(i)}$ and $d_k^{(i)}=0$ if $s_k^{(i)}=s_{k-1}^{(i)}$ for $1 \leq k \leq t_i$.  Note that $\pi$ non-crossing implies it is uniquely determined by its descent bottoms $b_1,\ldots,b_r$, taken together with its complete set of associated binary sequences ${\bf d^{(1)}},\ldots,{\bf d^{(r+1)}}$.

Let $\pi'$ be the uniquely determined member of $NC_n$ whose descents bottoms are the same as those of $\pi$ (i.e., are given by  $b_1,\ldots,b_{r}$) and whose associated binary sequences are given by $\text{rev}({\bf d^{(1)}}),\ldots,\text{rev}({\bf d^{(r+1)}})$, where $\text{rev}(s)$ denotes the reversal of a sequence $s$.  Note that the section $\rho_i'$ of $\pi'$ corresponding to $\rho_i$ in $\pi$ for $1 \leq i \leq r+1$ will have the same set of distinct letters for all $i$, and thus $\pi'$ will have the same ascent tops as $\pi$.  Also, an occurrence of $1^a23\cdots m$ or $12\cdots(m-1)m^a$ within some section $\rho_i$ of $\pi$ will result in an occurrence of the other pattern within $\rho_i'$ of $\pi'$, and vice versa.  Further, an occurrence of either pattern must lie completely within a section $\rho_i$ of $\pi$ or $\rho_i'$ of $\pi'$, as neither contains a descent.  Since the mapping $\pi \mapsto \pi'$ is an involution on $NC_n$, and hence bijective, the desired equivalence of patterns follows.
\end{proof}


\begin{thebibliography}{99}

\bibitem{BM}
A. Burstein and T. Mansour, Counting occurrences of some subword patterns, \emph{Discrete Math. Theor. Comput. Sci.} 6:1 (2003), 1--12.

\bibitem{FS}
P. Flajolet and R. Sedgewick, \emph{Analytic Combinatorics}, Cambridge University Press, Cambridge, UK (2009).

\bibitem{HM}
Q. Hou and T. Mansour, Kernel method and linear recurrence system, \emph{J. Comput. Appl. Math.} 261(1) (2008), 227--242.

\bibitem{Kl}
M. Klazar, On $abab$-free and $abba$-free set partitions, \emph{European J. Combin.} 17 (1996), 53--68.

\bibitem{LF}
Z. Lin and S. Fu, On $1212$-avoiding restricted growth functions, \emph{Electron. J. Combin.} 24(1) (2017), \#P1.53.

\bibitem{Man}
T. Mansour, \emph{Combinatorics of Set Partitions}, CRC Press, Boca Raton, FL (2012).

\bibitem{MS1}
T. Mansour and M. Shattuck, Counting subwords in flattened involutions and Kummer functions, \emph{J. Difference Equ. Appl.} 22:10 (2016), 1404--1425.

\bibitem{MS2}
T. Mansour and M. Shattuck, Visibility in non-crossing and non-nesting partitions, \emph{J. Difference Equ. Appl.} 27:3 (2021), 354--375.

\bibitem{MSh}
T. Mansour and M. Shattuck, Counting occurrences of subword patterns in non-crossing partitions, \emph{Art Discrete Appl. Math.} 6 (2023), \#P3.03 (19pp.).

\bibitem{MSY}
T. Mansour, M. Shattuck and S. Yan, Counting subwords in a partition of a set, \emph{Electron. J. Combin.} 17 (2010), \#R19.

\bibitem{Mi} S. Milne, A $q$-analog of restricted growth functions, Dobinski's equality, and Charlier polynomials, \emph{Trans. Amer. Math. Soc.} 245 (1978), 89--118.

\bibitem{Si}
R. Simion, Combinatorial statistics on non-crossing partitions, \emph{J. Combin. Theory Ser. A} 66 (1994), 270--301.

\bibitem{Sloane}
N. J. A. Sloane et al., The On-Line Encyclopedia of Integer Sequences, 2019. Available at https://oeis.org.

\bibitem{Wilf} H. Wilf, generatingfunctionology, Academic Press, Cambridge, MA (1990).

\bibitem{YY} F. Yano and H. Yoshida, Some set partition statistics in non-crossing partitions and generating functions, \emph{Discrete Math.} 307 (2007), 3147--3160.

\bibitem{ZZ}
H. Zhao and Z. Zhong, Two statistics linking Dyck paths and non-crossing partitions,
\emph{Electron. J. Combin.} 18(1) (2011), \#P83.

\end{thebibliography}
\end{document}